\definecolor{Red}{rgb}{0.7,0,0.1}
\definecolor{Green}{rgb}{0.0,0.45,0.0}
\newcommand{\cV}{\mathbb{V}}
\newcommand{\cT}{\mathcal{T}}
\newcommand{\IND}[1] {{ \mathds{1}_{ #1 }} }
\newcommand{\bX}{\mathbf{X}}
\newcommand{\rme}{\mathrm{e}}
\def\NewTheorem#1#2{%
	\newaliascnt{#1}{thmm}
	\newtheorem{#1}[#1]{#2}
	\aliascntresetthe{#1}
	\expandafter\def\csname #1autorefname\endcsname{#2}
}
\numberwithin{equation}{section}
\theoremstyle{remark}
\theoremstyle{definition}
\newtheorem*{problem}{Explosion problem}
\begin{document}

\title{Doubly Stochastic Yule Cascades (Part I): The explosion problem in the
time-reversible case}
\author{Radu Dascaliuc\thanks{Department of Mathematics,  Oregon State University, Corvallis, OR, 97331. {dascalir@math.oregonstate.edu}}
\and Tuan N.\ Pham\thanks{Department of Mathematics,  Brigham Young University, Provo, UT, 84602. {tuan.pham@mathematics.byu.edu}}
\and Enrique Thomann\thanks{Department of Mathematics,  Oregon State University, Corvallis, OR, 97331. {thomann@math.oregonstate.edu}}
\and
Edward C.\ Waymire\thanks{Department of Mathematics,  Oregon State University, Corvallis, OR, 97331. {waymire@math.oregonstate.edu}}
}

\maketitle

\begin{abstract}
Motivated by the probabilistic methods for nonlinear differential equations introduced by McKean (1975) for the Kolmogorov-Petrovski-Piskunov (KPP) equation, and by Le Jan and Sznitman (1997)  for the incompressible Navier-Stokes equations (NSE),  we 
identify a new class of stochastic cascade models, referred to as 
{\it doubly stochastic Yule cascades}. We establish non-explosion
 criteria under the assumption that the randomization of Yule intensities from
 generation to generation is by an ergodic time-reversible Markov process. 
 In addition to the cascade models that arise in the analysis of certain
 deterministic nonlinear differential equations, this model includes the
multiplicative branching random walks, the branching Markov processes, 
  and the stochastic generalizations of the percolation and/or cell ageing
  models  introduced by Aldous and Shields (1988) 
 and independently by Athreya (1985). 
 \end{abstract}

\section {Background Motivation and Definition of Doubly Stochastic Yule Cascades}

Doubly stochastic Yule cascades represent a new class of models that involve a branching structure governed by exponential waiting times with random intensities. 
This class of models is quite diverse from the perspective of nonlinear PDEs to purely probabilistic models of stochastic phenomena, such as percolation and aging models. Our particular motivation comes from a class of 
evolutionary PDEs which, after suitable normalization in the Fourier space, can be expressed in a mild-type form:
\begin{equation}\label{mild_PDE_general}
u(t,\xi)=u_0(\xi)\rme^{-\lambda(\xi)\,t}+\int_{0}^{t}\lambda(\xi)\rme^{-\lambda(\xi)\,s}\int_{\mathbb{R}^d} B\left(u(t-s,\eta), u(t-s,\xi-\eta)\right) H(\eta|\xi)\, d\eta\,d s,
\end{equation}
where $u_0$ represents the initial data, $\lambda(\cdot)$ represents linear part of the PDE (a Fourier multiplier), $B(\cdot,\cdot)$ represents a nonlinearity of quadratic type, and $H(\cdot|\xi)$ is a $\xi$-dependent probability kernel. 

Two particular examples of such PDEs that we consider are the incompressible 3D Navier-Stokes equations (NSE) and Kolmogorov-Petrovski-Piskunov equation (KPP), also known as Fisher-KPP equation (see \autoref{examples}). A remarkable observation, dating back to McKean's original work on KPP \cites{mckean1975application,MB_1983} and the Le Jan and Sznitman's paper \cite{YLJ_AS_1997} for NSE, is that such a mild formulation can be interpreted as an expected value of a stochastic process $\bX(\xi,t)$ built, via the quadratic term $B(\cdot,\cdot)$, on a binary tree structure governed by exponential waiting times between branchings. The exponential intensities $\lambda(\cdot)$ are in turn random and governed by the distribution $H(\cdot|\cdot)$. Thus, the problems from the analysis of the PDE  (\ref{mild_PDE_general}) can be re-cast in terms of properties of the ``solution" stochastic process $\bX$.  In particular, a basic question about the branching structure becomes that of {\em stochastic explosion}: does the stochastic cascade generate infinitely many branches by a finite time $t>0$? An answer to this question directly affects existence and uniqueness properties of solutions to (\ref{mild_PDE_general}).
For example, the classical branching Brownian motion associated with the classical KPP equation is non-explosive, resulting in uniqueness for solutions of the corresponding initial value problem \cite{mckean1975application}, while the branching diffusion processes associated with certain generalizations to the KPP equation are explosive, leading to both non-uniqueness and finite-time blowup of solutions \cite{ito2012diffusion}*{pp. 206-211} (see also \cites{Fujita, Lopez, Nagasawa}). In the NSE case, Le Jan and Sznitman \cite{YLJ_AS_1997} circumvented the problem of stochastic explosion by using a thinning procedure. However, the thinning masks possible explosion of the underlying stochastic structures, which could hint at possible lack of well-posedness of NSE in certain settings (see e.g.\  \cites{RD_NM_ET_EW_2015, RD_ET_EW_2019, DTT_22}). 
For a background on stochastic cascades arising from PDEs and the role of stochastic non-explosion in the existence and uniqueness theory of the solutions, see \autoref{PDE-casc}.

\begin{remark}
It is worth noting that the explosion in a stochastic cascade corresponding to a PDE is not equivalent to finite-time blowup of the solutions. Rather, it is directly connected to the existence and uniqueness of the stochastic processes whose expectation yields a solution to \eqref{mild_PDE_general} (see \autoref{PDE-casc}). In fact, there are simple equations associated with non-explosive cascades and admitting finite-time blowup solutions \cite{alphariccati, DTT_22}.
\end{remark}

 There are differences between the classical branching Brownian diffusion structures
associated with KPP-type equations in the physical space and the branching random walk structures associated with NSE in the Fourier space.  First, the former are scalar (concentration) equations, while the latter are 
vectorial (velocity) equations. Secondly, and more importantly, the successive generation of particles along each
tree path is not independent, although there is Markov dependence. The lack of independence complicates the problem of explosion/non-explosion. To our knowledge, 
a basic theory of non-explosion for such Markov-dependent branching structures is unavailable in the published literature.  Therefore,
the purpose of this paper is twofold: first, to identify a general stochastic structure which is flexible enough to accommodate a variety of similar models, and second, to determine general criteria for {non-explosion}.  As the starting point, 
let us recall the classical Yule cascade.

On the full infinite binary tree  ${\mathbb T}=\{\theta\}\cup(\cup_{n=1}^\infty\{1,2\}^n)$, let us denote by $\theta$ the root. For a path $s\in\partial{\mathbb T} =
\{1,2\}^\infty$, we
denote by $s|n = (s_1,\dots,s_n)$, where $n\ge 1$, the restriction of $s$ to the first $n$
generations, with the convention that $s|0 = \theta$.  The generational height of a vertex $v=s|n$ is denoted by $|v|=n$.
A vertex uniquely determines the genealogical sequence between
it and the root.

As a counting process, the classical Yule cascade is typically introduced as a continuous parameter 
Galton-Watson branching process with  single progenitor, with
offspring distribution $p_2=1$, and with infinitesimal rate parameter $\lambda >0$ (or equivalently, as a pure birth Markov process with rate $\lambda >0$). 
The case $\lambda=1$ is referred to as the {\em standard Yule cascade} and be viewed as a tree-indexed family $\{T_v\}_{v\in\mathbb T}$ of i.i.d.\  mean-one exponential random variables. Correspondingly, the classical Yule cascade with the intensity parameter $\lambda$ becomes the family $\{\lambda^{-1}T_v\}_{v\in\mathbb T}$, which is a re-scaling of the standard Yule cascade.

Viewed this way, the
above counting process can be defined by the cardinalities $N(t) = \# V(t), t\ge 0$,
of the set-valued evolution
\begin{equation}
\label{yuledef}
V(t) = \begin{cases} \{\theta\} &\mbox{if } t\le {\frac{1} {\lambda}}T_\theta,\\
\left\{v\in{\mathbb T}: \sum_{j=0}^{|v|-1}{\frac{1}{\lambda}}T_{v|j}<t\le  \sum_{j=0}^{|v|}{\frac{1}{\lambda}}T_{v|j}\right\}
&\mbox{otherwise}.
\end{cases}
\end{equation}

More generally, one can define a \emph{non-homogeneous Yule cascade} with positive parameters (intensities) $\{\lambda_v\}_{v\in\mathbb{T}}$ as a tree-indexed family $\{\lambda_v^{-1}T_v\}_{v\in\mathbb{T}}$ where $\{T_v\}_{v\in\mathbb{T}}$ is the standard Yule cascade.

As in the case of doubly stochastic Poisson process, one may allow the intensities of a non-homogeneous Yule cascade to be positive random variables.  This essentially defines the {\it doubly stochastic Yule cascade.}   

\begin{definition}
\label{DSY_def}
We refer to a tree-indexed family
of random variables $\{{\lambda_v^{-1}{T_v}}\}_{v\in\mathbb T}$,
where $\{\lambda_v\}_{v\in\mathbb{T}}$ is a tree-indexed family of positive random variables independent of the standard Yule cascade $\{T_v\}_{v\in\mathbb{T}}$, as a {\it doubly stochastic Yule \textup{(}DSY\textup{)} cascade} with intensities $\{\lambda_v\}_{v\in\mathbb{T}}$.
\end{definition}

\begin{remark}Equivalently to \autoref{DSY_def}, the DSY cascade can viewed
as a pair of tree-indexed families of positive random variables
$\Lambda = \{\lambda_v\}_{v\in{\mathbb T}}$ and $\{T_v\}_{v\in{\mathbb T}}$
such that conditionally given $\Lambda$,  $\{{\lambda_v}^{-1}{T_v}\}_{v\in{\mathbb T}}$ is 
distributed as a {\em non-homogeneous} Yule cascade with corresponding set of parameters 
$\Lambda$.  With this definition, it is relatively straightforward that $\{T_v\}_{v\in{\mathbb T}}$ 
must be a standard Yule cascade, independent of $\Lambda$.
\end{remark}

Motivated by the dynamical systems nature of (\ref{mild_PDE_general}), we consider an {\em evolutionary process} associated to DSY, a straightforward generalization of (\ref{yuledef}): 
\begin{equation}
\label{DSY_ev_def}
V(t) = \begin{dcases} \{\theta\} &\mbox{if } t\le {\frac{1} {\lambda_\theta}}T_\theta,\\
\left\{v\in{\mathbb T}: \sum_{j=0}^{|v|-1}{\frac{1}{\lambda_{v|j}}}T_{v|j}<t\le  \sum_{j=0}^{|v|}{\frac{1}{\lambda_{v|j}}}T_{v|j}\right\}
&\mbox{otherwise}.
\end{dcases}
\end{equation}
One can interpret $V(t)$ as the set of vertices of the DSY cascade that cross time $t>0$ (\autoref{cascade}).

\begin{figure}[h]
\centering
\includegraphics[scale=1]{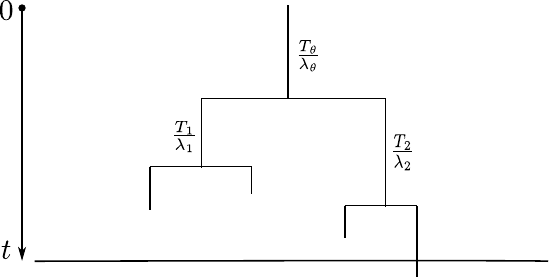}
\caption{Doubly stochastic Yule  cascade with random intensities $\{\lambda_v\}$.}
\label{cascade}
\end{figure}

A basic probability problem associated with the stochastic evolution of (\ref{DSY_ev_def}) is the {\em explosion problem}. The paper \cite{IB_YP94} is something of a 
loosely related precursor to the question 
of interest here for the DSY cascade.
 That is:
\begin{problem}
\emph{Will the cascade reach every finite time horizon $t>0$ in finitely many branchings \textup{(}{non-explosion}\textup{)}, or can it happen that there will be infinitely many branches before a finite time horizon \textup{(}{explosion}\textup{)}? See \autoref{cascade} for a visual representation of the problem.}
\end{problem}

The explosion problem can be formulated using the notion of \emph{explosion time} as follows.
\begin{definition}
\label{expl_time}
The {\em explosion time} of a DSY cascade $\{\lambda_v^{-1}T_v\}_{v\in\mathbb{T}}$ is a $[0,\infty]$-valued random variable $\zeta$ defined by
\[\zeta =\underset{n\ge 0 }{\mathop{\sup }}\,\underset{|v|=n}{\mathop{\min }}\,\sum\limits_{j=0}^{n}{\frac{{{T}_{v|j}}}{\lambda_{v|j}}}.\]
The event of {\em explosion} and {\em non-explosion} is defined by $[\zeta<\infty]$ and $[\zeta=\infty]$, respectively. The cascade is said to be {\em non-explosive} if $\mathbb{P}(\zeta=\infty)=1$, and {\em explosive} if $\mathbb{P}(\zeta=\infty)<1$.
\end{definition}
\begin{remark}\label{infinitepath} 
Intuitively, the explosion time of a DSY cascade is the shortest path. Specifically, for each sample point $\omega$ there exists a path $s=s(\omega)\in\partial\mathbb{T}$ such that $\zeta(\omega)=\sum_{j=0}^\infty\frac{T_{s|j}(\omega)}{\lambda_{s|j}(\omega)}$. To see this, starting at the root $\theta$, this path can be constructed recursively thanks to the ``inherited'' structure of the explosion time. 
Namely, we go to the left branch if the left subtree has a smaller explosion time than the right subtree. Otherwise, we go to the right branch. The notion of explosion is consistent with the intuitive idea illustrated in \autoref{cascade}: on the event of explosion, there exists a random path that never reaches some finite time $t$, and thus the tree has generated infinitely many vertices by that time.
\end{remark}
\begin{remark}
Note that $\lim_{t\to\zeta^-}|V(t)|=\infty$.
\end{remark}
While it is well-known that the standard Yule cascade is non-explosive  \cite[p.\ 450]{WF1968}, the present paper focuses on the explosion
problem for doubly stochastic Yule cascades.

As already noted, DSY cascades arise naturally in the 
analysis of stochastic cascade models of nonlinear differential equations such as the Navier-Stokes equation, the KPP equation, as well as the complex Burgers equation \cite{RD_NM_ET_EW2019}, and the $\alpha$-Riccati equation \cite{RD_ET_EW_2019}.  
This framework may also be viewed as a 
doubly stochastic
generalization of a class
of random cascade models introduced in \cite{KA_1985}, and
independently in \cite{DA_PS_1988}, in which the times between
branchings at the $|v|$-th generation are (deterministically) scaled to be
 exponentially distributed with intensities $\lambda_v = \alpha^{-|v|}$, for a positive
 parameter $\alpha$.   This deterministically changing  
rate of splitting according to generation is analyzed in the case
$0<\alpha\le 1$ in \cite{KA_1985} and
\cite{RD_NM_ET_EW2017},  and in the case $\alpha> 1$ in \cite{DA_PS_1988}. In the latter 
reference, the model is interpreted in terms of both data compression 
and percolation.  Recently, such models have also been considered
for  important cellular biology questions related to ageing and cancer,
 where 
 generational dependent cell division rates occur
 and decrease
with generations; see e.g. \cites{KB_PP_2010, CL_RDP_BFS_2012}
and other related references in the medical and biological literature.

\begin{remark} For the percolation model \cite{DA_PS_1988}, the event of explosion corresponds
 to the occurrence of a cluster of infinitely many \lq\lq wet sites\rq\rq connected
 to the root in finite time.   For the biological model \cite{KB_PP_2010}, the
 ageing  is represented by non-explosive conditions for the cascade.
 \end{remark}

From the point
of view of differential equations, these models also
correspond to a class of $\alpha$-Riccati differential equations 
analyzed in \cite{KA_1985} and \cite{RD_ET_EW_2019}.

The DSY cascades introduced in \autoref{DSY_def} are quite general, and in order to consider the explosion problem we will further assume certain Markov-chain structure underlying the random intensities $\lambda_v$ (see \autoref{DSYMcascade}), with transition probabilities satisfying time-reversibility constraints. We note that in the non-homogeneous case ($\lambda_v$'s are constant) various approaches, such as the martingale or semigroup techniques (discussed in \autoref{settings}) can be taken to study explosion problems. In the case of random intensities $\lambda_v$, which is required by our applications, the standard available tools are limited, even in the case of Markov transitions for the intensities along a path. This  necessitates a new approach.

Our main result is a general non-explosion criterion inspired by large-deviation techniques and expressed in terms of a bound on a spectral radius of an associated linear operator (see \autoref{noexplosion}). 
This theorem and its corollaries are sufficient to determine 
non-explosion in a variety of interesting DSY cascades, such as those associated with NSE, KPP, and certain stochastic models. In particular, following  Orum \cite[Sec.\ 7.9]{orum},
 our approach to KPP identifies a new DSY cascade structure that can be naturally associated with KPP in {\em Fourier space}, which is quite different from the branching motion associated with KPP in physical space settings. Although our interest is mainly on DSY cascades on a binary tree, which are well-suited with PDEs with quadratic nonlinearity, our techniques can be applied to tree structures with random number of offspring  (see \autoref{non-binary}, \autoref{nonexpbound-gw}). 

While we focus the present paper on the time-reversible case,
the problem is of interest for non-reversible, in fact
  non-ergodic, cases as well; see \cite{DTTW_22} for explosion criteria by
   methods that do not require reversibility.

The paper is organized as follows.  In \autoref{settings}, we define a specific type of DSY cascades to consider the non-explosion problem. We then formulate and prove the main results regarding non-explosion in \autoref{main}. An extension of the main results to non-binary trees is discussed in \autoref{non-binary}. In \autoref{examples}, we apply our non-explosion criteria to the classical birth and death processes and 
to stochastic cascades associated with NSE and KPP equations. 
We finish with some concluding remarks in \autoref{conclusion}. Some background about the connection between the explosion/non-explosion problems and the well-posedness problems of evolutionary PDEs is provided in \autoref{PDE-casc}.

\section{Type $(\mathcal{M})$ Doubly Stochastic Yule Cascade}\label{settings}

In order to analyze the explosion problem for DSY cascades, 
we need additional assumptions on the intensities $\lambda_{v|j}$ in \autoref{DSY_def}. Again, we are motivated by the DSY cascade that underlies equation (\ref{mild_PDE_general}). For the purpose of illustration, one may view (\ref{mild_PDE_general}) as the mild formulation of the 3-dimensional NSE in the Fourier space (see \autoref{PDE-casc}). At each wave vector $\xi\in\mathbb{R}^3$, a DSY cascade is generated with $\lambda_{v} = \lambda(W_{v})>0$ where $W_\theta\equiv\xi$ and
$W_v$, for $v\neq\theta$, is random wave vector distributed according to a probability kernel $H$, consistent with the governing equations. Although the wave vectors $W_v$ 
are vectors in $\mathbb{R}^3$, the explosion time $\zeta(\xi)$ for the tree-indexed random field depends only on the intensities $\lambda_v$, which in turn depends only  magnitudes $|W_v|$. This family constitutes a branching Markov process on a scalar state space.

In the typical cases, such as NSE or KPP,  the transition probability kernel $H$ is such that  the family $\{X_v=W_v\}_{v\in\mathbb{T}}$ is a binary branching
Markov process on $\mathbb{R}^d$. More generally, Markov structure is a natural extension of independence in stochastic models, which motivates the following definition.

\begin{definition}
\label{DSYMcascade}
We say that a DSY cascade $\{\lambda_v^{-1}T_v\}_{v\in\mathbb{T}}$ is of type ($\mathcal{M}$) if $\lambda_v=\lambda(X_v)$, where $\lambda$ is a $(0,\infty)$-valued function and $\{X_v\}_{v\in\mathbb{T}}$ is a tree-indexed family of random variables satisfying:
\begin{enumerate}[(A)]
\item For any path $s \in\partial{\mathbb T}$,
the sequence $X_{s|0}, X_{s|1}, X_{s|2},\dots$ is a time-homogeneous Markov chain on a measurable state space $(S,\mathcal{S})$.
\item For any path $s\in\partial{\mathbb T}$, the transition probability of the Markov chain $X_{s|0}, X_{s|1}, X_{s|2},\dots$ does not depend on $s$.
\end{enumerate}
\end{definition}

Our main goal is to provide criteria for non-explosion of the type ($\mathcal{M}$) DSY cascades (i.e.\ $\zeta=\infty$ a.s.\ as defined in \autoref{expl_time}).

To place the explosion problem in the perspective of Markov semigroups, we close this section by considering a particular case where $\lambda_v$ are deterministic, i.e.\ the non-homogeneous Yule cascades. Let $\mathcal{E}$ be the family of all finite sets $W\subset \mathbb{T}$ such that either $W=\{\theta\}$ or $\{\theta\}\neq W= V^v$ for some $V\in{\mathcal E}$ and $v\in V$, where $V^v=V\backslash\{v\}\cup\{v*1,v*2\}$. Here $v*1$ and $v*2$ denote the two offspring of vertex $v$. Endow $\mathcal{E}$ with the discrete topology 
defined by the usual discrete metric (i.e. $d(V,V)=0$ and $d(V,W)=1$ for $V,W\in\mathcal{E}$, $V\ne W$). 
The non-explosive non-homogeneous Yule cascades with intensities $\{\lambda_v\}_{v\in\mathbb{T}}$ admit a semigroup formulation in which the set-valued evolution (\ref{DSY_ev_def}) can be represented as 
a semi-group $\{S_t:t\ge 0\}$ of positive linear contraction operators on 
 the space $C_0({\mathcal E})$ of continuous functions  
on ${\mathcal E}$ vanishing at infinity.

In particular, the infinitesimal
transition rates are given by 
\begin{equation}q(V,W) = \begin{cases*}
      \lambda_v & if $W = V^v$\ for some $v\in V$, \\
      -\sum_{v\in V}\lambda_v & if $W=V$,\\        
        0    & otherwise.
    \end{cases*}
  \end{equation}
  If $0<\lambda_v\le 2^{-|v|}$ for all $v\in{\mathbb T}$, then 
  for all $V\in\mathcal{E}$, the rates
 $|q(V,V)| = \sum_{v\in V}\lambda_v
 \le \sum_{v\in V}2^{-|v|}= 1$ are bounded (see \cite{RD_NM_ET_EW2017}). In this case, 
 $S_t = e^{tL}$ is the uniquely
  associated strongly continuous semigroup for these rates where
  \begin{equation}
\label{infcore}
Lf(V) = \sum_{W\in{\mathcal E}}q(V,W)(f(W)-f(V)) = 
\sum_{v\in V}\lambda_{v}(f(V^v)-f(V)), \ V\in{\mathcal E},\ 
f\in C_{0}({\mathcal E}).
\end{equation}
The non-explosion problem may be viewed as conditions on the rates
for which $(L,{\mathcal D})$ continues to generate a {\it conservative} 
positive contraction semigroup, i.e., 
$\sup_{0\le f\le 1}S_tf(V) = 1$ for all $V\in{\mathcal E}, t\ge 0$, on 
the state space ${\mathcal E}$, or for the existence of unique global
solutions to the
Cauchy problem 
\begin{equation}
\frac{\partial u}{\partial t} = Lu, \quad u(0) = u_0\in {\mathcal D}\subset C_0({\mathcal E}),
\end{equation} 
where $u(t,V) = S_tu_0(V), \ V\in{\mathcal E}, \ t\ge 0$.
On the other hand, 
explosion leads to `compactifications' of the state space ${\mathcal E}$
and non-uniqueness of transition semigroups, also of interest.
One may note that ${\mathcal E}$ also embodies a tree ancestory partial order. 
In any case, from this perspective the DSY cascades may be viewed as 
(semi-Markov) non-homogeneous Yule evolutions in a random environment.
The approach we adopt for the explosion problem in this paper is related
in so far as the formulation is in terms of transition operators for a related
discrete parameter process, rather than directly with the continuous parameter
process $V(t), t\ge 0$. While Lyapounov techniques
could be fruitful for non-explosion criteria for the non-homogeneous 
Yule cascade, e.g., see \cite{MH_JM2006}, necessary and sufficient
 explosion criteria for these have recently been obtained by
methods of the present paper by \cite{TP_2021}.

In the general framework of a type ($\mathcal{M}$) DSY cascade, the Markov operator $L$ is itself random, which makes its analysis challenging.

\section{Main Results}\label{main}
The following key lemma identifies the nature of the problem
as a competition between the branching rate and the behavior of the 
intensities along paths.
\begin{lemma}[Key Lemma]
\label{nonexpbound}
Let $\{\lambda_v^{-1}T_v\}_{v\in\mathbb{T}}$ be a DSY cascade such that for each $s\in\partial\mathbb{T}$, the distribution of the sequence $\lambda_{s|0}$, $\lambda_{s|1}$, $\lambda_{s|2}$, \ldots does not depend on $s$. Then for $a>0$ and an arbitrary fixed path ${s}\in\partial{\mathbb T}$,
\begin{equation}
\label{noexpbound}
{\mathbb E} e^{-a\zeta}\le \liminf_{n\to\infty}2^n{\mathbb E}\prod_{j=0}^n{\frac{\lambda_{{s}|j}}{
a+\lambda_{{s}|j}}},
\end{equation}
where $\zeta$ is given in Definition \ref{expl_time}.
Consequently, if 
\begin{equation}\label{noexpbound1}
\liminf_{n\to\infty}2^n{\mathbb E}\prod_{j=0}^n{\frac{\lambda_{{s}|j}}{
a+\lambda_{{s}|j}}}=0
\end{equation} 
for some $a>0$ then the cascade is non-explosive. 
\end{lemma}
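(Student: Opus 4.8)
**The plan is to bound $\mathbb{E}e^{-a\zeta}$ from above by exploiting the recursive ("inherited") structure of the explosion time.**

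The key observation is that the explosion time satisfies a self-similar recursion. Writing $\zeta$ for the explosion time of the whole cascade, and letting $\zeta^{(1)}, \zeta^{(2)}$ be the explosion times of the two subtrees rooted at the children $1$ and $2$ of $\theta$, the construction in \autoref{infinitepath} gives
\[
\zeta = \frac{T_\theta}{\lambda_\theta} + \min\left\{\zeta^{(1)}, \zeta^{(2)}\right\}.
\]
Conditionally on $\lambda_\theta$, the variable $T_\theta$ is mean-one exponential and independent of the two subtrees, and the two subtrees are conditionally independent copies of the same structure (shifted one generation). I will first verify this decomposition carefully from \autoref{expl_time}, since $\min_{|v|=n}$ over the full tree splits as the min over the two subtrees.

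Let me think about what I actually want to prove here. The bound is $\mathbb{E}e^{-a\zeta}\le \liminf_n 2^n\,\mathbb{E}\prod_{j=0}^n \lambda_{s|j}/(a+\lambda_{s|j})$. Since $T_\theta/\lambda_\theta$ is exponential rate $\lambda_\theta$ conditionally, we have $\mathbb{E}[e^{-aT_\theta/\lambda_\theta}\mid\lambda_\theta] = \lambda_\theta/(a+\lambda_\theta)$, which is exactly the factor appearing in the product. So the strategy is to iterate the recursion.

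Rather than using the exact recursion with the $\min$, which entangles the two subtrees through $e^{-a\min\{\zeta^{(1)},\zeta^{(2)}\}}$, I would instead bound $\zeta$ from above by the time along a single path. For any fixed $n$, since $\zeta = \sup_{m}\min_{|v|=m}\sum_{j=0}^m T_{v|j}/\lambda_{v|j} \ge \min_{|v|=n}\sum_{j=0}^n T_{v|j}/\lambda_{v|j}$, I get
\[
e^{-a\zeta}\le \exp\left(-a\min_{|v|=n}\sum_{j=0}^n \frac{T_{v|j}}{\lambda_{v|j}}\right) = \max_{|v|=n}\exp\left(-a\sum_{j=0}^n \frac{T_{v|j}}{\lambda_{v|j}}\right).
\]
Then bound the max by the sum over the $2^n$ vertices at level $n$:
\[
\mathbb{E}e^{-a\zeta}\le \sum_{|v|=n}\mathbb{E}\exp\left(-a\sum_{j=0}^n \frac{T_{v|j}}{\lambda_{v|j}}\right).
\]

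Now I condition on the intensities $\{\lambda_v\}$. Given the intensities, the $T_{v|j}$ along any single branch $\theta, v|1,\dots,v|n$ are independent mean-one exponentials, so
\[
\mathbb{E}\left[\exp\left(-a\sum_{j=0}^n \frac{T_{v|j}}{\lambda_{v|j}}\right)\,\Big|\,\Lambda\right] = \prod_{j=0}^n \frac{\lambda_{v|j}}{a+\lambda_{v|j}}.
\]
Taking expectation over $\Lambda$ and using the hypothesis that the law of $(\lambda_{v|0},\dots,\lambda_{v|n})$ is the same for every vertex $v$ with $|v|=n$ (this is exactly the path-distribution invariance assumed in the Key Lemma), each of the $2^n$ summands equals $\mathbb{E}\prod_{j=0}^n \lambda_{s|j}/(a+\lambda_{s|j})$ for the fixed reference path $s$. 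Hence
\[
\mathbb{E}e^{-a\zeta}\le 2^n\,\mathbb{E}\prod_{j=0}^n \frac{\lambda_{s|j}}{a+\lambda_{s|j}}.
\]
Since the left side does not depend on $n$, taking $\liminf_{n\to\infty}$ yields \eqref{noexpbound}. The consequence \eqref{noexpbound1} is then immediate: if the $\liminf$ is zero then $\mathbb{E}e^{-a\zeta}=0$, forcing $e^{-a\zeta}=0$ a.s., i.e.\ $\zeta=\infty$ a.s.

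The main obstacle I anticipate is the bookkeeping in the conditioning step. I must be careful that, although the full families $\{T_v\}$ and $\{\lambda_v\}$ are only required to be independent of each other (Definition \ref{DSY_def}), the restriction to a single branch gives genuinely independent exponentials conditionally on $\Lambda$, and that the law of the intensity vector along a branch is genuinely $s$-independent so that the $2^n$ terms collapse to a single product. The union bound $\max\le\sum$ is where sharpness is lost, but that is acceptable since we only need an upper bound; the payoff is that it completely decouples the two subtrees and avoids having to handle the $\min$ inside the exponential.
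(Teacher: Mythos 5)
Your proposal is correct and takes essentially the same route as the paper's own proof: truncate $\zeta$ at generation $n$, bound the resulting maximum by the sum over the $2^n$ level-$n$ vertices, use the path-distribution invariance to collapse that sum to a single term, and evaluate the conditional exponential moments given the intensities to obtain $\prod_{j=0}^n \lambda_{s|j}/(a+\lambda_{s|j})$. The only cosmetic differences are that you fix $n$ first and take $\liminf$ at the end (making the paper's appeal to Fatou's lemma unnecessary), and your opening digression on the recursive decomposition is harmless since you explicitly abandon it.
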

\begin{proof}

By Fatou's lemma, some large deviation estimates \cite{JB_1977}
 and the simple bound on a maximum by the sum
 \begin{eqnarray*}
\label{maxbysum}
{\mathbb E} e^{-a\zeta} &\le& \liminf_{n\to\infty}{\mathbb E}e^{-\min_{|v|=n}\sum_{j=0}^na \lambda_{v|j}^{-1}T_{v|j}}\nonumber\\
&\le& \liminf_{n\to\infty}{\mathbb E}\sum_{|v|=n}e^{-\sum_{j=0}^na \lambda_{v|j}^{-1}T_{v|j}}\nonumber\\
&=& \liminf_{n\to\infty}{\mathbb E}2^ne^{-\sum_{j=0}^na \lambda_{{s}|j}^{-1}T_{{s}|j}}\nonumber\\
&=& \liminf_{n\to\infty}2^n{\mathbb E}\prod_{j=0}^n{\frac{\lambda_{{s}|j}}{
a+\lambda_{{s}|j}}}
\end{eqnarray*}
where ${s}\in\partial\mathbb{T}$ is an arbitrary fixed path. If the right hand side of \eqref{noexpbound} is equal to zero for some number $a>0$, then $\mathbb{E}e^{-a\zeta}=0$. This leads to $\zeta=\infty$ a.s.
\end{proof}

\begin{remark}
Similar bounds are routine in the computation
of extremal particle speeds for branching random walks having
i.i.d.\ displacements; see e.g.\ \cite{ZS_2015}. However, there appears to be little literature on 
the general theory of branching random walks for more general
ergodic Markov displacements treated here; see
\cite{EW_SW_1997} for another example.  
We are unaware of a theory to determine the speed of the left-most
particle for such  Markov dependent branching random walks.  Similar remarks
apply to first passage percolation, e.g. \cite{AA_MD_JH2017, IB_YP1994}.
\end{remark}

The main results in this paper give sufficient conditions for \eqref{noexpbound1} to hold for DSY cascades of type ($\mathcal{M}$) under the assumption that along each path $s\in\partial\mathbb{T}$ the Markov chain 
$X_{s|0}$, $X_{s|1}$, $X_{s|2}$,\ldots is time reversible.
Let $\gamma$ is an invariant probability distribution of the Markov process on the state space $(S,\mathcal{S})$. For each $a\ge 0$, one can define a 
positive contraction
operator $T_a: L^2(\gamma)\to L^2(\gamma)$ by 
\begin{equation}\label{Ta}T_af(x) = 
\frac{\lambda(x)}{a+\lambda(x)}\int_Sf(y)p(x,dy),
\end{equation}
where $p(x,dy)$ is the one-step transition probability of the Markov process.
In particular, 
$$T_0 f (x)= \mathbb{E}_x [f(X_1)] = \int_S f(y)p(x,dy). 
$$
Note that $T_af(x) = 
g_a(x)T_0f(x)$, where
\begin{equation}\label{ga}
g_a(x) = \frac{\lambda(x)}{a+\lambda(x)}.
\end{equation}
The time reversibility property of the Markov chain 
makes $T_0$  a self-adjoint operator on $L^2(\gamma)$, i.e.
$$
\langle f_1, T_0 f_2\rangle_\gamma = \langle T_0f_1, f_2\rangle_\gamma\ \ \ \forall\, f_1, f_2 \in L^2(\gamma).
$$

The main theorem to be proven is the following.

\begin{thm}
\label{noexplosion}
Let $\{\lambda(X_v)^{-1}T_v\}_{v\in\mathbb{T}}$ be a DSY cascade of type ($\mathcal{M}$) such that along each path $s\in\partial\mathbb{T}$ the Markov process $X_{s|0}$, $X_{s|1}$, $X_{s|2}$,\ldots is time-reversible with respect to an invariant probability measure $\gamma$. Suppose that for some $a >0$,
\begin{equation}\label{eq:23211}\limsup_{n\to\infty}\sqrt[n]{\langle 1,T_a^n1\rangle_\gamma} <\frac{1}{2}.\end{equation}
Then 
\begin{enumerate}[(a)]
\item for $\gamma$-a.e. $x\in S$, the cascade is non-explosive for initial state $X_0=x$. 
\item If, in addition, $p(x_0,dy)\ll\gamma(dy)$ for some $x_0\in S$ then the cascade associated with the initial state $X_\theta=x_0$ is non-explosive. 
\end{enumerate}
\end{thm}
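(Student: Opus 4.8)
The plan is to reduce the theorem to the Key Lemma (\autoref{nonexpbound}) by identifying the iterates of $T_a$ with expectations along the Markov chain, and then to upgrade the resulting $\gamma$-averaged decay into an almost-everywhere statement. The starting point is the identity
\[
T_a^{n+1}1(x)=\mathbb{E}_x\prod_{j=0}^{n}g_a(X_{s|j}),\qquad g_a=\frac{\lambda}{a+\lambda},
\]
valid for every $x\in S$, where $\mathbb{E}_x$ denotes expectation for the cascade with root state $X_\theta=x$ and $s\in\partial\mathbb{T}$ is any fixed path. This follows by induction on $n$ from $T_a=g_a\,T_0$, $T_0 1=1$, and the Markov property (A)--(B) of \autoref{DSYMcascade}; note that only the law of the chain along a single path enters, which is exactly the hypothesis under which \autoref{nonexpbound} is proved.

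For part (a), I would apply \autoref{nonexpbound} conditionally on $X_\theta=x$. Conditioning on the root state leaves a type $(\mathcal{M})$ cascade whose intensity sequence along every path has a law independent of $s$, so the Key Lemma gives $\mathbb{E}_x e^{-a\zeta}\le\liminf_{n}2^n T_a^{n+1}1(x)$. Since $0\le g_a\le 1$ and $T_0$ is a Markov operator, $T_a^{n}1\ge 0$ pointwise, and integrating against $\gamma$ gives $\int_S 2^n T_a^{n+1}1\,\rmd\gamma=2^n\langle 1,T_a^{n+1}1\rangle_\gamma$. Hypothesis \eqref{eq:23211} furnishes an $r<\tfrac12$ with $\langle 1,T_a^{n}1\rangle_\gamma\le r^{n}$ for all large $n$, whence $\sum_n 2^n\langle 1,T_a^{n+1}1\rangle_\gamma<\infty$ because $2r<1$. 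By Tonelli and non-negativity, $\sum_n 2^n T_a^{n+1}1(x)<\infty$ for $\gamma$-a.e.\ $x$; in particular $2^n T_a^{n+1}1(x)\to 0$, so $\mathbb{E}_x e^{-a\zeta}=0$ and the cascade is non-explosive for $\gamma$-a.e.\ initial state $x$.

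Part (b) then follows from (a) by a single branching step. Writing $\zeta^{(1)},\zeta^{(2)}$ for the explosion times of the two subtrees rooted at the children of $\theta$, the self-similar structure recalled in \autoref{infinitepath} gives $\zeta=\lambda_\theta^{-1}T_\theta+\min(\zeta^{(1)},\zeta^{(2)})$ with $\lambda_\theta^{-1}T_\theta<\infty$ a.s., so that $[\zeta<\infty]=[\zeta^{(1)}<\infty]\cup[\zeta^{(2)}<\infty]$. Conditioning on the child states, each distributed as $p(x_0,\cdot)\ll\gamma$, and using $\mathbb{P}_x(\zeta<\infty)=0$ for $\gamma$-a.e.\ $x$ from part (a), we obtain $\mathbb{P}_{x_0}(\zeta^{(i)}<\infty)=\mathbb{E}_{x_0}[\mathbb{P}_{X_i}(\zeta<\infty)]=0$ for $i=1,2$. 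A union bound then gives $\mathbb{P}_{x_0}(\zeta<\infty)=0$, i.e.\ non-explosion.

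The step I expect to be most delicate is the passage in part (a) from an averaged bound to a pointwise one: \autoref{nonexpbound} controls $\mathbb{E}_x e^{-a\zeta}$ only through $\liminf_n 2^n T_a^{n+1}1(x)$, whereas \eqref{eq:23211} is an $L^1(\gamma)$ statement about $\langle 1,T_a^n 1\rangle_\gamma$; positivity of $T_a$ (so that Tonelli applies) together with the strict gap $r<\tfrac12$ is exactly what bridges the two. Time-reversibility, which makes $T_0$ self-adjoint, is not strictly needed for this implication, but it is what guarantees that the quantity in \eqref{eq:23211} coincides with the spectral radius of the associated symmetric operator $f\mapsto g_a^{1/2}\,T_0(g_a^{1/2}f)$, rendering the criterion sharp and verifiable in the examples and corollaries.
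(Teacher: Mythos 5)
Your proof is correct and follows essentially the same route as the paper: the operator identity $T_a^{n+1}1(x)=\mathbb{E}_x\prod_{j=0}^n g_a(X_{s|j})$, the Key Lemma applied under $\mathbb{P}_x$, integration against $\gamma$ to upgrade \eqref{eq:23211} to a $\gamma$-a.e.\ statement, and the same one-step decomposition $\zeta=\lambda_\theta^{-1}T_\theta+\min(\zeta^{(1)},\zeta^{(2)})$ together with $p(x_0,\cdot)\ll\gamma$ for part (b). The only differences are cosmetic---you use summability of $\sum_n 2^n\langle 1,T_a^{n+1}1\rangle_\gamma$ plus Tonelli where the paper uses Fatou's lemma, and a union bound on $\mathbb{P}(\zeta^{(\sigma)}<\infty)$ where the paper sums Laplace transforms---and your side remark that self-adjointness is not needed for this implication is accurate, since in the paper's proof reversibility enters only through the inessential index shift $\langle 1,T_a^{n+1}1\rangle_\gamma\le\langle 1,T_a^{n}1\rangle_\gamma$.
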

The proof follows from a few preliminary calculations.  For simplicity of exposition, we denote
$X_j = X_{{s}|j}$ for an arbitrary fixed path ${s}\in\partial{\mathbb T}$.

\begin{lemma} For any $a\ge 0$ and $f\in L^2(\gamma)$,
\begin{equation}
\label{eq11191}
{\mathbb E}_x\prod_{j=0}^ng_a(X_j)f(X_{n+1}) 
= T_a^{n+1}f(x),
\end{equation}
where $g_a$ is defined by \eqref{ga}.
\end{lemma}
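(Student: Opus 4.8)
The plan is to prove the identity by induction on $n$, using the time-homogeneous Markov property of the chain $X_0, X_1, X_2, \dots$ together with the factorization $T_a f = g_a\cdot T_0 f$ recorded just above the statement. The structural feature that makes the induction clean is that each factor $g_a(X_j)$ depends on the chain at a single time, so the product telescopes under repeated conditioning.

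For the base case $n=0$, under $\mathbb{E}_x$ the initial state $X_0=x$ is deterministic, so $g_a(X_0)=g_a(x)$ factors out of the expectation and
\[
\mathbb{E}_x\, g_a(X_0)f(X_1) = g_a(x)\,\mathbb{E}_x f(X_1) = g_a(x)\,T_0 f(x) = T_a f(x),
\]
which is exactly $T_a^{1}f(x)$. For the inductive step, I would assume the claim at level $n-1$ for every $f\in L^2(\gamma)$, again pull the deterministic factor $g_a(X_0)=g_a(x)$ out of $\mathbb{E}_x$, and then condition on $X_1$. By the Markov property and the time-homogeneity/path-independence built into \autoref{DSYMcascade}, the shifted sequence $X_1, X_2, \dots$ started from $X_1=y$ is distributed as the chain started at $y$, so
\[
\mathbb{E}_x\prod_{j=1}^n g_a(X_j)f(X_{n+1}) = \int_S p(x,dy)\,\mathbb{E}_y\prod_{j=0}^{n-1}g_a(X_j)f(X_n).
\]
Applying the inductive hypothesis to the inner expectation replaces it by $T_a^n f(y)$, and integrating against $p(x,dy)$ produces $T_0\bigl(T_a^n f\bigr)(x)$. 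Multiplying back by the factored $g_a(x)$ and using $g_a\cdot T_0 = T_a$ gives $g_a(x)\,T_0(T_a^n f)(x) = T_a(T_a^n f)(x) = T_a^{n+1}f(x)$, which closes the induction.

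The only issues that require attention are measure-theoretic rather than conceptual. Because $a>0$ forces $0\le g_a\le 1$, the operator $T_a$ is a contraction on $L^2(\gamma)$ (since $T_0$ is a Markov contraction for the invariant measure $\gamma$), so every iterate $T_a^n f$ stays in $L^2(\gamma)\subset L^1(\gamma)$ and the conditioning and Fubini interchange above are legitimate. The pointwise bound $\bigl|\prod_{j}g_a(X_j)f(X_{n+1})\bigr|\le |f(X_{n+1})|$ together with invariance of $\gamma$ also gives integrability of the left-hand side for $\gamma$-a.e.\ starting point $x$. I therefore expect no genuine obstacle: the mathematical content is a single application of the Markov property per induction step, and the identity is to be read as holding for $\gamma$-a.e.\ $x$, equivalently as an identity of elements of $L^2(\gamma)$.
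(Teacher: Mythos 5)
Your proof is correct and is essentially the paper's argument: an induction on $n$ driven by one application of the Markov property per step, combined with the factorization $T_af = g_a\,T_0f$. The only difference is cosmetic --- you condition on the first step and apply the inductive hypothesis at the shifted starting point $y$ (producing $T_a\circ T_a^{n}$), whereas the paper conditions on $\sigma(X_1,\dots,X_n)$ to peel off the \emph{last} factor and applies the inductive hypothesis to the function $T_af$ (producing $T_a^{n}\circ T_a$); since powers of $T_a$ commute, the two orderings are interchangeable.
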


\begin{proof}
For $n=0$, one has
\begin{equation*}
{\mathbb E}_x g_a(X_0)f(X_{1}) = \frac{\lambda(x)}{a+\lambda(x)}
\int_Sf(y)p(x,dy) 
=T_af(x).
\end{equation*}
For $n\ge 1$,
\begin{eqnarray*}
{\mathbb E}_x\prod_{j=0}^n{g_a(X_j)}f(X_{n+1})
&=& {\mathbb E}_x\prod_{j=0}^{n}{g_a(X_j)}{\mathbb E}[f(X_{n+1})\vert\sigma(X_1,\dots,X_n)]\nonumber\\
&=& {\mathbb E}_x\prod_{j=0}^n{g_a(X_j)}\int_Sf(z)p(X_n,dz)
\nonumber\\
&=&  {\mathbb E}_x\prod_{j=0}^{n-1}{g_a(X_j)}
\int_Sf(y)\frac{\lambda(X_n)}{a+\lambda(X_n)}p(X_n,dy)
\nonumber\\
&=&{\mathbb E}_x\prod_{j=0}^{n-1}{g_a(X_j)}
T_af(X_n).
\end{eqnarray*}
Here $\sigma(X_1,\dots,X_n)$ denotes the $\sigma$-field generated by $X_1,\ldots,X_n$. The result then follows by induction.

\end{proof}

Let us proceed to the proof \autoref{noexplosion} as follows.
\begin{proof}[Proof of \autoref{noexplosion}]
For $f\in L^2(\gamma)$, by integrating \eqref{eq11191} against $\gamma(dx)$ and noting that $T_af(x) = 
g_a(x)T_0f(x)$, one gets
\begin{eqnarray*}
\label{eq11193}
\mathbb{E}_\gamma\prod_{j=0}^n{g_a(X_j)}f(X_{n+1}) 
&=&
\langle 1,T_a^{n+1}f\rangle_\gamma 
=
\langle 1,g_aT_0T_a^nf\rangle_\gamma \\
&=&
\langle g_a,T_0T_a^nf\rangle_\gamma 
=\langle T_0 g_a,T_a^nf\rangle_\gamma.
\end{eqnarray*}
By taking $f=1$, one gets ${{\mathbb{E}}_{\gamma }}\prod_{j=0}^{n}{{{g}_{a}}({{X}_{j}})}\le {{\left\langle 1,T_{a}^{n}1 \right\rangle }_{\gamma }}$. Then
\begin{equation*}
\limsup_{n\to\infty}\frac{1}{n}\log 2^n {\mathbb E}_\gamma \prod_{j=0}^n{g_a(X_j)} \le 
\underset{n\to \infty }{\mathop{\lim \sup }}\,\frac{1}{n}\log \left( {{2}^{n}}{{\left\langle 1,T_{a}^{n}1 \right\rangle }_{\gamma }} \right)<0.
\end{equation*}
This implies that there exists $\delta>0$ such that
$
\log 2^n {\mathbb E}_\gamma \prod_{j=0}^n{g_a(X_j)} 
\le - n \delta$
for all but finitely many $n$. 
Thus, 
\begin{equation}\label{1231201}
2^n{\mathbb E}_\gamma \prod_{j=0}^n\frac{\lambda(X_{{s}|j})}{a+\lambda(X_{{s}|j})} 
\le e^{-n\delta}.
\end{equation}
According the estimate \eqref{noexpbound} and Fatou's Lemma,
\begin{eqnarray}
\nonumber\int_{S}{{{\mathbb{E}}_{x}}{{e}^{-a\zeta }}\gamma (dx)}&\le& \int_{S}{\underset{n\to \infty }{\mathop{\lim \inf }}\,{{2}^{n}}{{\mathbb{E}}_{x}}\prod\limits_{j=0}^{n}{\frac{\lambda ({{X}_{s|j}})}{a+\lambda ({{X}_{s|j}})}}\gamma (dx)}\\
\nonumber&\le& \underset{n\to \infty }{\mathop{\lim \inf }}\,\int_{S}{{{2}^{n}}{{\mathbb{E}}_{x}}\prod\limits_{j=0}^{n}{\frac{\lambda ({{X}_{s|j}})}{a+\lambda ({{X}_{s|j}})}}\gamma (dx)}\\
\nonumber&=&\underset{n\to \infty }{\mathop{\lim \inf }}\,{{2}^{n}}{{\mathbb{E}}_{\gamma }}\prod\limits_{j=0}^{n}{\frac{\lambda ({{X}_{s|j}})}{a+\lambda ({{X}_{s|j}})}}\\
\label{1231202}&=& 0.
\end{eqnarray}
Therefore, $\mathbb{E}_xe^{-a\zeta}=0$ for $\gamma$-a.e. $x\in S$. Consequently, for $\gamma$-a.e. $x\in S$ the cascade associated with initial state $X_\theta=x$ is non-explosive.

Now suppose that $p(x,dy)\ll \gamma(dy)$ for some $x\in S$.  
With $X_\theta=x$, the explosion time can be written as $\zeta =T_\theta\lambda(x)^{-1}+\min \left\{ {{\zeta }^{(1)}},{{\zeta }^{(2)}} \right\}$ where
\[{{\zeta }^{(\sigma )}}=\underset{n\ge 1 }{\mathop{\sup }}\,\underset{|v|=n}{\mathop{\min }}\,\sum\limits_{j=1}^{n}{\frac{{{T}_{\sigma *v|j}}}{\lambda ({{X}_{\sigma *v|j}})}},\ \ \ \ \ \sigma \in \{1,2\}.\]
Here, the notation $\sigma*v$ denotes the vertices on the subtree rooted at $\sigma$. The explosion time is then equal to the holding time at the root $\theta,$ appropriately scaled, plus the smaller of the explosion times of the two subtrees re-rooted at $\sigma= 1,\,2,$ respectively.
Note that $\zeta^{(\sigma)}$ is the explosion time of the DSY cascade $\left\{ \frac{T_{v}^{(\sigma )}}{\lambda (X_{v}^{(\sigma )})}:\,v\in \mathbb{T} \right\}$ where $T_{v}^{(\sigma )}={{T}_{\sigma *v}}$ and $X_{v}^{(\sigma )}={{X}_{\sigma *v}}$. We have
\begin{eqnarray}
\label{1231204}{{\mathbb{E}}_{x}}{{e}^{-a\zeta }}&\le& {{\mathbb{E}}_{x}}[{{e}^{-a\min \{{{\zeta }^{(1)}},{{\zeta }^{(2)}}\}}}]\le \sum_{\sigma=1}^2\mathbb{E}[{{e}^{-a{{\zeta }^{(\sigma)}}}}|{{X}_{\theta }}=x].
\end{eqnarray}
Fix $\sigma\in\{1,2\}$. By conditioning on $X_\sigma$,
\begin{eqnarray}
\nonumber\mathbb{E}[{{e}^{-a{{\zeta }^{(\sigma )}}}}|{{X}_{\theta }}=x]&=&\int_{S}{\mathbb{E}[{{e}^{-a{{\zeta }^{(\sigma )}}}}|{{X}_{\theta }}=x,{{X}_{\sigma }}=y]p(x,dy)}\\
\nonumber&=&\int_{S}{\mathbb{E}[{{e}^{-a{{\zeta }^{(\sigma )}}}}|X_{\theta }^{(\sigma )}=y]p(x,dy)}\\
\label{1231203}&=&\int_{S}{{{\mathbb{E}}_{y}}{{e}^{-a{{\zeta }^{(\sigma )}}}}p(x,dy)}.
\end{eqnarray}
Fix a path $s\in\partial\mathbb{T}$ that contains vertex $\sigma$. Because of the time-homogeneity of the Markov chain $X_{s|0}$, $X_{s|1}$, $X_{s|2}$,\ldots one has
\[{{\mathbb{E}}_{\gamma }}\prod\limits_{j=0}^{n}{\frac{\lambda ({{X}_{s|j}})}{a+\lambda ({{X}_{s|j}})}}={{\mathbb{E}}_{\gamma }}\prod\limits_{j=0}^{n}{\frac{\lambda (X_{s|j}^{(\sigma )})}{a+\lambda (X_{s|j}^{(\sigma )})}}\ \ \ \forall n\in \mathbb{N}.\]
By \eqref{1231201},
\[{{2}^{n}}{{\mathbb{E}}_{\gamma }}\prod\limits_{j=0}^{n}{\frac{\lambda (X_{s|j}^{(\sigma )})}{a+\lambda (X_{s|j}^{(\sigma )})}}\le {{e}^{-n\delta }}\ \ \ \forall n\in \mathbb{N}.\]
One can apply the estimates in \eqref{1231202} with $\zeta$, $X_v$, $x$ being replaced by $\zeta^{(\sigma)}$, $X^{\sigma}_v$, $y$, respectively. Thus, $\mathbb{E}_ye^{-a\zeta^{(\sigma)}}=0$ for $\gamma$-a.e.\ $y\in S$. Because $p(x,dy)\ll\gamma(dy)$, one has $\mathbb{E}_ye^{-a\zeta^{(\sigma)}}=0$ for $p(x,\cdot)$-a.e.\ $y\in S$. Then \eqref{1231203} implies that $\mathbb{E}[e^{-a\zeta^{(\sigma)}}|X_\theta=x]=0$ for $\sigma\in\{1,2\}$. By \eqref{1231204}, $\mathbb{E}_xe^{-a\zeta}=0$. Therefore, $\zeta=\infty$ a.s.
\end{proof}

\begin{cor}\label{specrad}
Let $\{\lambda(X_v)^{-1}T_v\}_{v\in\mathbb{T}}$ be a DSY cascade of type ($\mathcal{M}$) with time-reversible probability measure $\gamma$. If the spectral radius of $T_a:L^2(\gamma)\to L^2(\gamma)$ or its operator norm is strictly less than $1/2$ for some $a>0$ then the conclusions in \autoref{noexplosion} holds.
\end{cor}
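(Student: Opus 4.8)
The plan is to reduce the hypothesis of the corollary to the spectral condition \eqref{eq:23211} of \autoref{noexplosion} and then invoke that theorem directly. First I would record that $T_a$ is a bounded operator on $L^2(\gamma)$: since $a>0$ and $\lambda>0$, we have $0<g_a(x)<1$, so multiplication by $g_a$ is a contraction, and since $\gamma$ is invariant, $T_0$ is an $L^2(\gamma)$-contraction (by Jensen's inequality, $\int |T_0f|^2\,d\gamma\le\int T_0(|f|^2)\,d\gamma=\int|f|^2\,d\gamma$). Hence $\|T_a\|\le 1$. I would also note that $T_a$ is a positive operator—it maps nonnegative functions to nonnegative functions, because $g_a\ge 0$ and $T_0$ is a transition operator—so that $T_a^n1\ge 0$ and the quantity $\langle 1,T_a^n1\rangle_\gamma$ appearing in \eqref{eq:23211} is a nonnegative real number whose $n$-th root is well defined.

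The core estimate is a single application of Cauchy--Schwarz. Since $\gamma$ is a probability measure, $\|1\|_\gamma=1$, and therefore
$$
\langle 1,T_a^n1\rangle_\gamma \;\le\; \|1\|_\gamma\,\|T_a^n1\|_\gamma \;\le\; \|T_a^n\|.
$$
Taking $n$-th roots and passing to the limit, Gelfand's spectral-radius formula $\|T_a^n\|^{1/n}\to\rho(T_a)$ yields
$$
\limsup_{n\to\infty}\sqrt[n]{\langle 1,T_a^n1\rangle_\gamma}\;\le\;\rho(T_a).
$$
Thus if $\rho(T_a)<\tfrac12$, the hypothesis \eqref{eq:23211} is verified and the conclusions of \autoref{noexplosion} follow. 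For the operator-norm version of the hypothesis I would simply observe that $\rho(T_a)\le\|T_a\|$ always, so $\|T_a\|<\tfrac12$ implies $\rho(T_a)<\tfrac12$; alternatively one can read off $\sqrt[n]{\langle 1,T_a^n1\rangle_\gamma}\le\|T_a\|$ directly from the display above, without Gelfand's formula.

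I do not anticipate a genuine obstacle here: the entire content is the passage from a norm/spectral bound on $T_a$ to the averaged quantity in \eqref{eq:23211}, and both steps are routine. The one point worth stating carefully is that $T_a$ is generally \emph{not} self-adjoint—one has $T_a^*=T_0M_{g_a}$ rather than $M_{g_a}T_0$—so the identity ``spectral radius $=$ operator norm'' is unavailable for $T_a$ itself, and the two hypotheses in the corollary are genuinely distinct, with the operator-norm condition the stronger of the two. It may be worth remarking for the sake of applications that $T_a=M_{g_a}T_0$ is similar, via conjugation by multiplication by $\sqrt{g_a}>0$, to the self-adjoint operator $M_{\sqrt{g_a}}T_0M_{\sqrt{g_a}}$; consequently $\rho(T_a)$ equals the operator norm of this symmetrized operator, which often makes the spectral radius computable. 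This observation, however, is not needed for the proof of the corollary, only for evaluating its hypothesis in concrete cases.
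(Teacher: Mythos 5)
Your proof is correct and follows essentially the same route as the paper: Cauchy--Schwarz gives $\langle 1,T_a^n1\rangle_\gamma\le\|T_a^n\|$, Gelfand's formula converts this into the bound $\limsup_n\sqrt[n]{\langle 1,T_a^n1\rangle_\gamma}\le\rho(T_a)$, and the inequality $\rho(T_a)\le\|T_a\|$ reduces the operator-norm hypothesis to the spectral-radius one. Your additional observations (positivity of $T_a$, non-self-adjointness, and the similarity to $M_{\sqrt{g_a}}T_0M_{\sqrt{g_a}}$) are sound but not needed; the core argument matches the paper's.
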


\begin{proof}
Denote by $\rho(T_a)$ the spectral radius of $T_a$. Because $\rho(T_a)\le \|T_a\|$, we can assume $\rho(T_a)<1/2$. By Cauchy-Schwarz inequality,
${{\left\langle 1,T_{a}^{n}1 \right\rangle }_{\gamma }}\le {{\left\| T_{a}^{n}1 \right\|}_{{{L}^{2}}(\gamma )}}\le \left\| T_{a}^{n} \right\|$. By Gelfand's formula,
\[\underset{n\to \infty }{\mathop{\lim \sup }}\,\sqrt[n]{{{\left\langle 1,T_{a}^{n}1 \right\rangle }_{\gamma }}}\le \underset{n\to \infty }{\mathop{\lim \sup }}\,\sqrt[n]{\left\| T_{a}^{n} \right\|}=\rho ({{T}_{a}}).\]
\end{proof}
\noindent In the next proposition, we give another sufficient condition, easier to verify, for a DSY cascade to be non-explosive. For this purpose, we strengthen the hypothesis by assuming:
\begin{itemize}
\item[(C)] \emph{There is a positive measure $m$ on $(S,{\mathcal S})$ such that
$p(x,dy)\ll m(dy)$ for every $x\in S$ and $\gamma(dx)\ll m(dx)$.}
\end{itemize}
Denote by $p(x,y)$ and $\gamma(x)$ the respective Radon-Nikodym
derivatives. We have the  {\it detailed balance} condition
 $$p(x,y)\gamma(x) = p(y,x)\gamma(y), \ \ m{\rm -a.e.}\ x,y\in S.$$
\begin{prop}\label{kppnonexp}
Let $\{\lambda(X_v)^{-1}T_v\}_{v\in\mathbb{T}}$ be a DSY cascade of type ($\mathcal{M}$) with condition (C). 
Assume further that the following trace condition holds
\begin{equation}
\label{extracond}
\int_Sg_1(x)^2p^{(2)}(x,x)m(dx) < \infty
\end{equation}
where $p^{(2)}$ is the two-step
transition \[p^{(2)}(x,y) = \int_Sp(x,z)p(z,y)m(dz), \ x,y\in S.\]
Then for $\gamma$-a.e.\ $x\in S$, the cascade is non-explosive for initial state $X_\theta=x$. If, in addition, $p(x_0,dy)\ll\gamma(dy)$ for some $x_0\in S$ then the cascade associated with the initial state $X_\theta=x_0$ is non-explosive. 
\end{prop}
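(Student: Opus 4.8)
The plan is to recognize the trace hypothesis \eqref{extracond} as precisely the statement that the operator $T_1$ is Hilbert-Schmidt on $L^2(\gamma)$, and then to use this to push the operator norm of $T_a$ below $1/2$ for large $a$, at which point \autoref{specrad} applies verbatim. The only genuine computation is a kernel identity that rewrites the Hilbert-Schmidt norm as the trace integral in \eqref{extracond}; everything afterward is monotonicity in $a$ together with dominated convergence. First I would represent $T_a$ as an integral operator against the reference measure $\gamma(x)\,m(dx)$ that underlies $L^2(\gamma)$. Since $T_af(x) = g_a(x)\int_S f(y)p(x,y)\,m(dy)$, the kernel relative to $\gamma\cdot m$ is $K_a(x,y) = g_a(x)p(x,y)/\gamma(y)$, so that $T_af(x) = \int_S K_a(x,y)f(y)\gamma(y)\,m(dy)$, and the Hilbert-Schmidt norm is
\[
\|T_a\|_{HS}^2 = \int_S\!\int_S |K_a(x,y)|^2\,\gamma(x)\gamma(y)\,m(dx)\,m(dy) = \int_S\!\int_S g_a(x)^2\,p(x,y)^2\,\frac{\gamma(x)}{\gamma(y)}\,m(dx)\,m(dy).
\]

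The one step that genuinely uses reversibility is the detailed balance identity $p(x,y)\gamma(x) = p(y,x)\gamma(y)$, which converts $p(x,y)^2\,\gamma(x)/\gamma(y)$ into $p(x,y)p(y,x)$; integrating in $y$ then collapses the inner integral to the diagonal two-step transition, yielding
\[
\|T_a\|_{HS}^2 = \int_S g_a(x)^2\,p^{(2)}(x,x)\,m(dx).
\]
In particular, the trace hypothesis \eqref{extracond} is exactly the assertion $\|T_1\|_{HS}^2 < \infty$, so $T_1$ is Hilbert-Schmidt and hence compact on $L^2(\gamma)$.

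To finish I would exploit that $g_a(x) = \lambda(x)/(a+\lambda(x))$ is decreasing in $a$, so $g_a \le g_1$ pointwise for every $a\ge 1$; consequently $\|T_a\|_{HS}^2 \le \|T_1\|_{HS}^2 < \infty$ for all $a\ge 1$, and since $g_a(x)^2 \to 0$ as $a\to\infty$ while being dominated by the integrable function $g_1(x)^2 p^{(2)}(x,x)$, dominated convergence gives $\|T_a\|_{HS} \to 0$. Because the operator norm is bounded by the Hilbert-Schmidt norm, $\|T_a\| \le \|T_a\|_{HS}$, I may fix $a$ large enough that $\|T_a\| < 1/2$, and \autoref{specrad} then delivers both conclusions: the $\gamma$-a.e.\ non-explosion statement, and, under the additional absolute continuity $p(x_0,dy)\ll\gamma(dy)$, non-explosion for the specific initial state $x_0$. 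The main obstacle, such as it is, is purely bookkeeping: setting up the kernel against the correct reference measure $\gamma\cdot m$ so that the detailed balance cancellation produces the symmetric diagonal quantity $p^{(2)}(x,x)$ rather than an unsymmetrized expression. Once the operator is expressed on $L^2(\gamma)$ in this way, the identification of \eqref{extracond} with $\|T_1\|_{HS}^2$ is forced, and the reduction to \autoref{specrad} is immediate.
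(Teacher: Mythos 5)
Your proposal is correct and follows essentially the same route as the paper: the paper's proof is exactly the Hilbert--Schmidt bound done by hand (pointwise Cauchy--Schwarz on $T_af$, detailed balance to collapse $p(x,y)^2\gamma(x)/\gamma(y)$ into $p(x,y)p(y,x)$ and hence $p^{(2)}(x,x)$, then dominated convergence in $a$ and an appeal to \autoref{specrad}). Your only difference is naming the quantity $\int_S g_a(x)^2p^{(2)}(x,x)\,m(dx)$ as $\|T_a\|_{HS}^2$ and invoking $\|T_a\|\le\|T_a\|_{HS}$, which is a repackaging rather than a different argument.
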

\begin{remark}
A sufficient condition for \eqref{extracond} is
\begin{equation}\label{eq:23212}\int_{S}^{ }{p^{(2)}(x,x)m(dx)}<\infty.\end{equation}
\end{remark}
\begin{proof}
For $f\in L^2(\gamma)$, by Cauchy-Schwarz's inequality, 
\begin{eqnarray*}
|{{T}_{a}}f(x)|&=&g_a(x)\int_{S}^{ }{|f(y)|\sqrt{\gamma (y)}\frac{p(x,y)}{\sqrt{\gamma (y)}}m(dy)}\\
&\le& g_a(x){{\left\| f \right\|}_{{{L}^{2}}(\gamma )}}\sqrt{\int_{S}^{ }{\frac{p{{(x,y)}^{2}}}{\gamma (y)}}m(dy)}.
\end{eqnarray*}
Squaring and multiplying both sides by $\gamma(x)$, and using the detailed balance, we get
\begin{eqnarray*}
{{T}_{a}}f{{(x)}^{2}}\gamma (x)&\le&{g_a(x)^2}\left\| f \right\|_{{{L}^{2}}(\gamma )}^{2}\int_{S}^{ }{\frac{p{{(x,y)}^{2}}\gamma (x)}{\gamma (y)}}m(dy)\\
&=&{g_a(x)^2}\left\| f \right\|_{{{L}^{2}}(\gamma )}^{2}\int_{S}^{ }{p(x,y)p(y,x)}m(dy)\\
&=&{g_a(x)^2}\left\| f \right\|_{{{L}^{2}}(\gamma )}^{2}{{p}^{(2)}}(x,x).
\end{eqnarray*}
Integrating with respect to measure $m(dx)$ leads to
\[\left\| {{T}_{a}}f \right\|_{{{L}^{2}}(\gamma )}^{2}\le \left\| f \right\|_{{{L}^{2}}(\gamma )}^{2}\int_{S}^{ }{{{F}_{a}}(x)m(dx)}\]
where ${{F}_{a}}(y)={{g_a(x)^2}p^{(2)}(x,x)}$.
Thus, $\left\| {{T}_{a}} \right\|_{{{L}^{2}}(\gamma )\to {{L}^{2}}(\gamma )}^{2}\le {{\left\| {{F}_{a}} \right\|}_{{{L}^{1}(m)}}}$. Note that $\lim_{a\to\infty}F_a(x)=0$ for all $x>0$, $F_a(x)\le F_1(x)$ for all $a>1$, and that $F_1\in L^1(m)$. By Lebesgue's Dominated Convergence Theorem, $\|F_a\|_{L^1(m)}\to 0$ as $a\to\infty$. Therefore, there exists $a>0$ such that $\|T_a\|_{L^2(\gamma)\to L^2(\gamma)}<1/2$. The cascade is non-explosive according to \autoref{specrad}.
\end{proof}

\begin{cor}\label{besselnonexp}
Let $\{\lambda(X_v)^{-1}T_v\}_{v\in\mathbb{T}}$ be a DSY cascade of type ($\mathcal{M}$) with condition (C). Suppose that
\[\underset{x>0}{\mathop{\sup }}\,\lambda (x)^bp^{(2)}(x,x)<\infty ,\ \ \ \ \int_{S}^{ }{\frac{\lambda (x)^{2-b}}{{{(1+\lambda (x))}^{2}}}m(dx)}<\infty \]
for some $0\le b\le 2$. Then for $\gamma$-a.e.\ $x\in S$, the cascade is non-explosive for initial state $X_\theta=x$. If, in addition, $p(x_0,dy)\ll\gamma(dy)$ for some $x_0\in S$ then the cascade associated with the initial state $X_\theta=x_0$ is non-explosive. 
\end{cor}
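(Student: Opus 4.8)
The plan is to derive this corollary directly from \autoref{kppnonexp} by verifying that the two hypotheses stated here imply the trace condition \eqref{extracond} required there. Recall that $g_1(x) = \lambda(x)/(1+\lambda(x))$, so the condition \eqref{extracond} reads $\int_S \frac{\lambda(x)^2}{(1+\lambda(x))^2}\, p^{(2)}(x,x)\, m(dx) < \infty$. The entire content of the argument is therefore a pointwise domination of this integrand, together with an invocation of the previous proposition; no new probabilistic input is needed, since condition (C) is already assumed.

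First I would split the power $\lambda(x)^2$ in the integrand as $\lambda(x)^2 = \lambda(x)^b\,\lambda(x)^{2-b}$, which is precisely where the free parameter $0\le b\le 2$ enters. This gives the factorization
\[
\frac{\lambda(x)^2}{(1+\lambda(x))^2}\, p^{(2)}(x,x) = \Big(\lambda(x)^b\, p^{(2)}(x,x)\Big)\cdot \frac{\lambda(x)^{2-b}}{(1+\lambda(x))^2}.
\]
The first hypothesis, $\sup_{x>0}\lambda(x)^b\, p^{(2)}(x,x) =: M < \infty$, bounds the first factor uniformly, so the integrand is dominated $m$-a.e. by $M\cdot \lambda(x)^{2-b}/(1+\lambda(x))^2$.

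Next I would integrate this bound against $m(dx)$ and apply the second hypothesis to obtain
\[
\int_S \frac{\lambda(x)^2}{(1+\lambda(x))^2}\, p^{(2)}(x,x)\, m(dx) \le M \int_S \frac{\lambda(x)^{2-b}}{(1+\lambda(x))^2}\, m(dx) < \infty,
\]
which is exactly \eqref{extracond}. Both conclusions—non-explosion for $\gamma$-a.e.\ initial state, and non-explosion for a specific $x_0$ under the additional absolute-continuity assumption $p(x_0,dy)\ll\gamma(dy)$—then follow verbatim from \autoref{kppnonexp}.

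Since the argument reduces to a one-line domination followed by citing the previous result, there is no genuine obstacle. The only point requiring minor care is the legitimacy of the splitting $\lambda^2 = \lambda^b\,\lambda^{2-b}$ across the whole range $0\le b\le 2$, which is harmless because $\lambda$ is strictly positive (so no division or $0^0$ issues arise) and both resulting factors are nonnegative and measurable. The role of the free parameter $b$ is to trade integrability of the diagonal two-step transition $p^{(2)}(x,x)$, weighted by a power of $\lambda$, against integrability of a complementary power of $\lambda$ in the second condition, yielding a flexible one-parameter family of easily verifiable sufficient conditions for non-explosion.
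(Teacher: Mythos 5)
Your proposal is correct and is exactly the paper's argument: the paper's own proof is the one-line remark that the hypotheses imply the trace condition \eqref{extracond}, after which \autoref{kppnonexp} applies, and your factorization $\lambda^2 = \lambda^b\,\lambda^{2-b}$ with the uniform bound on $\lambda(x)^b p^{(2)}(x,x)$ is precisely the ``easy to see'' verification being alluded to. Nothing is missing; you have simply written out the domination step the paper leaves implicit.
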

\begin{proof}
It is easy to see that \eqref{extracond} is satisfied.
\end{proof}

\section{DSY cascades on non-binary trees}\label{non-binary}

Although our interest is mainly on DSY cascades on a binary tree, which are well-suited with PDEs with quadratic nonlinearity, the techniques we used above can be applied to trees with random numbers of offspring, for example, Galton-Watson trees. 
Namely, let $\cV=\{\theta\}\cup \bigcup_{n\in\mathbb{N}}\mathbb{N}^n$ be the set of all possible vertices with $\theta$, as usual, denoting the root. 
Let $\{\lambda_v\}_{v\in \cV}$ be a family of positive random variables representing the intensities and $\{T_v\}_{v\in \cV}$ be a family of i.i.d.\ mean-one exponential random variables. Let $\cT\subset \cV$ be a random subtree of $\cV$, rooted at $\theta$.

\begin{definition}
Suppose the random structures $\cT$, $\{\lambda_v\}_{v\in \cV}$, and  $\{T_v\}_{v\in \cV}$ are independent. Then, we refer to the triplet $(\cT$, $\{\lambda_v\}_{v\in \cV},  \{T_v\}_{v\in \cV})$ as a {\em doubly stochastic Yule (DSY) cascade on a random tree structure} $\cT$. In analogy with the binary DSY cascades, we will use the notation $\{{\lambda^{-1}_vT_v}\}_{v\in{\cT}}$ for DSY cascades on random trees.
\end{definition} 

The essence of explosion of a DSY cascade is the occurrence of infinitely many
exponential clock ``rings’’ within a finite time horizon.  In particular,  finite
trees should be non-explosive. On the other hand, a general random tree structure may contain both finite (terminating) paths and infinite paths. A reasonable definition of explosion times is one in which any finite path has an infinite ``length''. A natural way to capture this feature is to assign the waiting time between a terminal vertex (leaf) and the next branching to
be infinite. We thus arrive at the following definition of the explosion time:
\begin{equation}
\label{explosiontime}
\zeta=\sup_{n\ge 0}\,\inf\limits_{|v|=n,\, v\in \cV}\sum_{j=0}^n\frac{T_{v|j}}{\lambda_{v|j}}\,(\IND{v|j\in \cT})^{-1},
\end{equation}
with the convention that $\frac{1}{0}=\infty$. Note that in the case of a binary tree this definition of $\zeta$ is consistent with \autoref{expl_time}.
As before, we refer to the event $\zeta<\infty$ as the {\em explosion event}. This notion of explosion is consistent with the intuitive idea illustrated in \autoref{cascade} (an analog of \autoref{infinitepath}): if $\zeta<t<\infty$ then there exists an infinite random path (the shortest path) of the DSY cascade that does not reach time $t$, and thus the tree has generated infinitely many vertices by that time. In contrast, observe that if the tree $\cT$ is subcritical (i.e. has a finite number of vertices), then $\zeta=\infty$  and the DSY cascade is automatically non-explosive. This is the case of Galton-Watson tree with the mean number of offspring $\mu\le1$ and the case of the thinned DSY-type cascade constructed by Le Jan and Sznitman for the Navier-Stokes equations \cite{YLJ_AS_1997}.
 
The key lemma (\autoref{nonexpbound}) can be extended to the case of trees with the random number of offspring as follows.
\begin{lemma}
\label{nonexpbound-gw}
Let $\{\lambda_v^{-1}T_v\}_{v\in\mathcal{T}}$ be a DSY cascade on a random tree structure $\cT$. Assume that, almost surely,  each vertex of $\cT$ has at least one offspring in $\cT$ and has mean number of offspring bounded by $\mu<\infty$. 
Suppose further that for each $s\in \partial\cV:=\mathbb{N}^{\infty}$, the distribution of the sequence $\lambda_{s|0}$, $\lambda_{s|1}$, $\lambda_{s|2}$, $\ldots$ does not depend on $s$.
Then for $a>0$ and an arbitrary fixed path ${s}\in\partial{\cV}$,
\begin{equation}
\label{noexpbound-gw}
{\mathbb E} e^{-a\zeta}\le \liminf_{n\to\infty}\mu^n{\mathbb E}\prod_{j=0}^n{\frac{\lambda_{{s}|j}}{
a+\lambda_{{s}|j}}}.
\end{equation}
Consequently, if 
\begin{equation}\label{noexpbound1-gw}
\liminf_{n\to\infty}\mu^n{\mathbb E}\prod_{j=0}^n{\frac{\lambda_{{s}|j}}{
a+\lambda_{{s}|j}}}=0
\end{equation} 
for some $a>0$ then the cascade is non-explosive. 
\end{lemma}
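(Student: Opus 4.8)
The plan is to follow the proof of the Key Lemma (\autoref{nonexpbound}) almost line for line, inserting exactly two modifications to absorb the random tree $\cT$: a reduction of the infimum defining $\zeta$ to vertices genuinely in $\cT$, and a replacement of the deterministic count $2^n$ by the expected population size. Because $\cT$ is a subtree rooted at $\theta$, a vertex $v\in\cV$ lies in $\cT$ only if all its ancestors do; hence the convention $1/0=\infty$ forces every term in \eqref{explosiontime} indexed by a $v$ with some ancestor outside $\cT$ to equal $+\infty$, so that
\[
\zeta=\sup_{n\ge 0}\ \inf_{|v|=n,\,v\in\cT}\ \sum_{j=0}^n\frac{T_{v|j}}{\lambda_{v|j}}=:\sup_{n\ge 0}a_n .
\]
The hypothesis that each vertex of $\cT$ has at least one offspring in $\cT$ guarantees the level sets $\{v\in\cT:|v|=n\}$ are a.s.\ nonempty, so every $a_n$ is finite; moreover $a_n$ is non-decreasing, since the parent of a level-$(n+1)$ vertex lies in $\cT$ at level $n$ with a smaller partial sum. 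Thus $e^{-a a_n}\to e^{-a\zeta}$, and as these are nonnegative, Fatou's lemma yields $\mathbb{E}e^{-a\zeta}\le\liminf_{n\to\infty}\mathbb{E}e^{-a a_n}$.

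Next I would apply the bound of a maximum by a sum, $e^{-a a_n}=\max_{|v|=n,\,v\in\cT}e^{-a\sum_{j=0}^n T_{v|j}/\lambda_{v|j}}\le\sum_{|v|=n,\,v\in\cV}\IND{v\in\cT}\,e^{-a\sum_{j=0}^n T_{v|j}/\lambda_{v|j}}$, and the genuinely new computation is the expectation of this sum. Here I would invoke the independence of $\cT$ from $\{\lambda_v\}$ and $\{T_v\}$ built into the definition of a DSY cascade on a random tree, so that $\IND{v\in\cT}$ decouples, and by Tonelli's theorem
\[
\mathbb{E}\sum_{|v|=n,\,v\in\cV}\IND{v\in\cT}\,e^{-a\sum_{j=0}^n T_{v|j}/\lambda_{v|j}}=\sum_{|v|=n,\,v\in\cV}\mathbb{P}(v\in\cT)\,\mathbb{E}\Big[e^{-a\sum_{j=0}^n T_{v|j}/\lambda_{v|j}}\Big].
\]
For each fixed $v$, conditioning on the intensities and using $\mathbb{E}[e^{-cT}]=(1+c)^{-1}$ for a mean-one exponential $T$ turns the inner expectation into $\mathbb{E}\prod_{j=0}^n\lambda_{v|j}/(a+\lambda_{v|j})$, which by the path-distribution hypothesis equals $\mathbb{E}\prod_{j=0}^n\lambda_{s|j}/(a+\lambda_{s|j})$ regardless of $v$. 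The surviving factor $\sum_{|v|=n}\mathbb{P}(v\in\cT)=\mathbb{E}[Z_n]$ is the expected number of level-$n$ vertices of $\cT$, and the one-step estimate $\mathbb{E}[Z_{n+1}\mid\mathcal F_n]\le\mu Z_n$ gives $\mathbb{E}[Z_n]\le\mu^n$. Assembling these factors produces \eqref{noexpbound-gw}, and the final clause follows as in \autoref{nonexpbound}: if the right-hand side vanishes for some $a>0$ then $\mathbb{E}e^{-a\zeta}=0$, whence $\zeta=\infty$ a.s.

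I expect the only step requiring real care, relative to the binary case, to be the decoupling in the second display: it is precisely the independence of $\cT$ from the exponential clocks and the intensities that lets the random population size separate from the path product and be bounded in mean by $\mu^n$. The at-least-one-offspring assumption must be used twice and I would flag it explicitly — once to keep the level infima over nonempty sets (so that the monotonicity and limit arguments are legitimate) and once, already at the reduction step, to rule out finite paths contributing a spurious finite value to $\zeta$. Everything else is a transcription of the Key Lemma with $2^n$ promoted to $\mathbb{E}[Z_n]$.
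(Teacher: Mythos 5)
Your proof is correct and takes essentially the same route as the paper's: Fatou/monotonicity, bounding the level-$n$ minimum over $\cT$ by a sum, decoupling the tree from the clocks and intensities (the paper cites this step as Wald's identity, which is exactly your Tonelli-plus-independence computation), and the bound $\mathbb{E}V_n\le\mu^n$. The only difference is expository: you spell out steps the paper leaves implicit, such as reducing the infimum in \eqref{explosiontime} to vertices of $\cT$ via the $1/0=\infty$ convention.
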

\begin{proof}
Let $V_n=\#\{v\in\cT:\ |v|=n\}$ be the random number of vertices in $\cT$ of generation $n$. First, note that ${\mathbb E}V_n \le \mu^n, n\ge 0$. By conditioning on $V_n$ (Wald's identity \cite{RB_EW_2017}),
\begin{eqnarray*}
\label{genmaxbysum}
{\mathbb E} e^{-a\zeta} &\le& 
\liminf_{n\to\infty}{\mathbb E}\exp\left({-\min_{|v|=n,\ v\in \cV}\sum_{j=0}^n a \frac{T_{v|j}}{\lambda_{v|j}}}\,(\IND{v|j\in \cT})^{-1}\right)\nonumber\\
&\le& \liminf_{n\to\infty}{\mathbb E}\sum_{|v|=n,\, v\in \mathcal{T}}\exp\left({-\sum_{j=0}^na \frac{T_{v|j}}{\lambda_{v|j}}}\right)\nonumber\\
&=& \liminf_{n\to\infty}{\mathbb E}V_n\,{\mathbb E}\exp\left({-\sum_{j=0}^na \frac{T_{{s}|j}}{\lambda_{{s}|j}}}\right)\nonumber\\
&\le& \liminf_{n\to\infty}\mu^n{\mathbb E}\prod_{j=0}^n\frac{\lambda_{{s}|j}}{
a+\lambda_{{s}|j}}.
\end{eqnarray*}
\end{proof}

\begin{remark}
Thanks to \autoref{nonexpbound-gw}, \autoref{noexplosion} and its corollaries extend naturally to DSY cascades on trees with random number of branches.
\end{remark}

\section{Examples}\label{examples}

The following example includes a large
class of DSY with time-reversible Markov process intensities and helps to clarify the role of the
additional trace condition in \autoref{kppnonexp}.

\begin{example}[Birth-Death Intensities]
Consider a type $(\mathcal{M})$ DSY with $\lambda(x)=x$ and a family of $\mathbb{N}$-valued random variables $\{X_v\}_{v\in\mathbb{T}}$ 
distributed with transition probabilities $p_{j,k}$ where 
\[p_{j,j+1}=\mathbb{P}(X_{v*1}=j+1\,|\,X_v=j)=\mathbb{P}(X_{v*2}=j+1\,|\,X_v=j)=\beta_j,\]
\[p_{j,j-1}=\mathbb{P}(X_{v*1}=j-1\,|\,X_v=j)=\mathbb{P}(X_{v*2}=j-1\,|\,X_v=j)=\delta_j,\]
where $\beta_1=1$, and $\delta_j = 1-\beta_j\in (0,1)$ for $j=2,3,\dots$ Here $v*1$ and $v*2$ denote the two offspring of vertex $v$. Along each path $s\in\partial\mathbb{T}$, the sequence $X_{s|0}$, $X_{s|1}$, $X_{s|2}$, \ldots is the birth-death process on the state space $S=\mathbb{N}$
 with reflection at $1$ and
birth-death rates $\beta_j,\,  \delta_j$ (see \cite{RB_EW2009},
p. 238-246).  This is an ergodic time-reversible
Markov process (see \cite{RB_EW2009}, Theorem 3.1(b), p. 241) with invariant
probability
\begin{equation}
\gamma_j = \frac{\beta_2\cdots\beta_{j-1}}{\delta_2\dots\delta_j}\gamma_1,\ \ \ j=2,3,\dots,
\end{equation}
provided that
$$\gamma_1=\sum_{j=2}^\infty\frac{\beta_2\cdots\beta_{j-1}}{\delta_2\dots\delta_j}<\infty.$$
Also
\begin{equation}
p^{(2)}_{j,j} = p_{j,j-1}p_{j-1,j} + p_{j,j+1}p_{j+1,j}
= (1-\beta_j)\beta_{j-1} + \beta_j(1-\beta_{j+1}).
\end{equation}
The trace condition \eqref{eq:23212} becomes
 $\sum_{j=1}^\infty p^{(2)}_{j,j} < \infty$. This condition together with the finiteness of  $\gamma_1$
  implies $\beta_j\to 0$ as
 $j\to\infty$, i.e., a stronger tendency to return to smaller states
 from states far away, which is a stronger condition than the ergodicity alone.
 \end{example}

\begin{example}[Bessel Cascade for NSE]\label{NSE_ex}

The Bessel cascade of the Navier-Stokes equations is a DSY cascade of type ($\mathcal{M}$) with $\lambda(x)=x^2$ and $\{X_v=|W_v|\}_{v\in\mathbb{T}}$ (the wave number magnitudes) \cites{RD_NM_ET_EW_2015, YLJ_AS_1997}. The transition probabilities have a density
\[p(x,y)=\begin{dcases}{\frac{e^{2x}-1}{x}}e^{-2y} &\mbox{if } x<y\\
\frac{1-e^{-2y}}{x} &\mbox{if } x\ge y.
\end{dcases}\]
One can check that along each path $s\in\partial\mathbb{T}$, the Markov process $X_{s|0}$, $X_{s|1}$, $X_{s|2}$, \ldots is time reversible with respect to the unique invariant probability density $\gamma(x)=4xe^{-2x}$, $x>0$. These transition probabilities are also realized by
the iterated maps 
\begin{eqnarray*}
X_{v*1} = U_{{v}*1}X_v + \frac{1}{2}T_{{v}*1},\ \ \ 
X_{v*2} = U_{{v}*2}X_v + {\frac{1}{2}}T_{{v}*2},
\end{eqnarray*}
where $(U_1,U_2), (U_{11},U_{12}), (U_{21},U_{22}),\dots$ is an i.i.d.\ family of bivariate random vectors uniformly distributed on the 
diagonal of the square $(0,1)\times(0,1)$, i.e., $U_1$ and $U_2$ are
each uniform on $(0,1)$ and $U_1+U_2=1$, and $\{T_v\}_{v\in\mathbb{T}}$ is a family of i.i.d.\ mean one exponentially distributed
random variables, independent of the $U$'s. 
In view of its mean-reversion character to unity, and
 the non-explosive character of the standard Yule process \cite{WF1968}, 
one might guess that the Bessel cascade is non-explosive.\footnote{This informal thinking lead to a previous erroneous proof in 
\cite{RD_NM_ET_EW_2015}*{Prop.\ 5.1 in the Appendix}, although the assertion remains valid as shown
in the present paper.} We will use \autoref{besselnonexp} (with $b=1$) to show that the Bessel cascade is non-explosive.
\[\int_{0}^{\infty }{p(x,y)p(y,x)}dy=\underbrace{\int_{0}^{x}{\frac{1-{{e}^{-2y}}}{x}\frac{{{e}^{2y}}-1}{y}{{e}^{-2x}}dy}}_{\{1\}}+\underbrace{\int_{x}^{\infty }{\frac{{{e}^{2x}}-1}{x}{{e}^{-2y}}\frac{1-{{e}^{-2x}}}{y}dy}}_{\{2\}}.\]
It suffices to show that $x^2\{1\}$ and $x^2\{2\}$ are bounded functions on $(0,\infty)$. We have
\[{{x}^{2}}\{1\}=\frac{\int_{0}^{x}{{{{({{e}^{y}}-{{e}^{-y}})}^{2}}}/{y}\,dy}}{{{e}^{2x}}/x}.\]
By L'Hospital Rule,
\[\underset{x\to \infty }{\mathop{\lim }}\,{{x}^{2}}\{1\}=\frac{{{({{e}^{x}}-{{e}^{-x}})}^{2}}/x}{(2x-1){{{e}^{2x}}}/{{{x}^{2}}}}=\frac{1}{2}.\]
Thus, $x^2\{1\}$ is a bounded function on $(0,\infty)$. On the other hand,
\[\{2\}\le \int_{x}^{\infty }{\frac{{{e}^{2x}}-1}{x}{{e}^{-2y}}\frac{1-{{e}^{-2x}}}{x}dy}=\frac{({{e}^{2x}}-1)(1-{{e}^{-2x}})}{{{x}^{2}}}\int_{x}^{\infty }{{{e}^{-2y}}dy}<\frac{1}{2x^2}.\]
This concludes the proof of the non-explosion of the Bessel cascade for every initial state $X_\theta=x>0$.
\end{example}

\begin{example}[A Mean-Field Cascade]
Let $\{X_v\}_{v\in\mathbb{T}}$ be a family a random variables such that along each path $s\in\partial\mathbb{T}$ the sequence $X_{s|1}$, $X_{s|2}$, $X_{s|3}$,\ldots is an i.i.d.\ sequence of random variables with distribution $\gamma(dx)$. For any positive measurable function $\lambda$ defined on the state space, one can check that $\{\lambda(X_v)^{-1}T_v\}_{v\in\mathbb{T}}$ is a DSY cascade of type ($\mathcal{M}$). The Markov chain along each path has transition probabilities $p(x,dy)=\gamma(dy)$. 
For $a>0$ and $s\in\partial\mathbb{T}$,
\[{{2}^{n}}\mathbb{E}\prod\limits_{j=0}^{n}{\frac{\lambda ({{X}_{s|j}})}{a+\lambda ({{X}_{s|j}})}}\le {{2}^{n}}\mathbb{E}\prod\limits_{j=1}^{n}{\frac{\lambda ({{X}_{s|j}})}{a+\lambda ({{X}_{s|j}})}}={{(2\mathbb{E}{{Y}_{a}})}^{n}},\]
where $Y_a=\lambda(X_1)/(a+\lambda(X_1))$. Note that $\lim_{a\to\infty}\mathbb{E}Y_a=0$ by Lebesgue's Dominated Convergence Theorem. Therefore, for sufficiently large $a>0$,
\[\underset{n\to \infty }{\mathop{\lim \inf }}\,{{2}^{n}}\mathbb{E}\prod\limits_{j=0}^{n}{\frac{\lambda ({{X}_{s|j}})}{a+\lambda ({{X}_{s|j}})}}=0.\]
By \autoref{nonexpbound}, the cascade is non-explosive (for any initial distribution).
\end{example}

\begin{example}[Cascade for KPP equation]\label{KPP_ex} The well-known KPP equation (in the \emph{physical space}) 
has yielded highly successful
 theories for branching Brownian motion and branching random walk as
documented, for example, in \cites{MB_1983, AK_1998}. In the \emph{Fourier space}, the equation is associated with a DSY cascade as detailed below. We will apply \autoref{kppnonexp} to show the non-explosion of the cascade. 
The same cascade was analyzed by Orum \cite[Sec.\ 7.9]{orum}, where the non-explosion was established by a different method (via the uniqueness of solutions to the equation).
Recall the KPP equation 
\begin{equation}
\label{KPPeqn}
\frac{\partial u}{\partial t} = \frac{\partial^2u}{\partial x^2} + u^2-u, \quad u(x,0) = u_0(x), x\in{\mathbb R},
\end{equation}
where we have omitted the typical coefficient $1/2$
of the Laplacian as a matter of notational convenience
on the Fourier side. The cascade model of this equation in the Fourier space is a discrete parameter branching 
Markov chain
obtained as follows. Taking Fourier transforms and 
 expressing \eqref{KPPeqn} in integrated form, one arrives at
\begin{equation}
\label{FKPPeqn}
\hat{u}(\xi,t) = \hat{u}_0(\xi)e^{-(1+\xi^2)t} + 
\int_0^t\int_{\mathbb R}e^{-(1+\xi^2)s}
\hat{u}(\eta,t-s)\hat{u}(\xi-\eta,t-s)d\eta ds.
\end{equation}
Here $\hat{f}(\xi) = \frac{1}{\sqrt{2\pi}}\int_{-\infty}^\infty e^{-i\xi x}f(x)dx, \xi\in{\mathbb R}$ denotes the Fourier transform
of an integrable function $f$. 
Defining $\chi(\xi,t) = \frac{\hat{u}(\xi,t)}{h(\xi)}$, for a 
positive function $h$ to be determined, one has
\begin{equation*}
\label{hFKPP}
\chi(\xi,t) = {\chi}_0(\xi)e^{-(1+ \xi^2)t} + 
\int_0^t\int_{\mathbb R}(1+\xi^2)e^{-(1+\xi^2)s}
\chi(\eta,t-s)\chi(\xi-\eta,t-s) \frac{h(\eta)h(\xi-\eta)}{(1+\xi^2)h(\xi)}d\eta ds.
\end{equation*}
The positive function $h$, referred to as a \emph{majorizing kernel} \cite{RBetal_2003}, is determined such that
\begin{equation}
H(\eta|\xi) = \frac{h(\eta)h(\xi-\eta)}{(1+\xi^2)h(\xi)}
\end{equation}
is a probability kernel. Thus, $h$ is a positive
function satisfying
\begin{equation}
h*h(\xi) = (1+\xi^2)h(\xi),\quad \xi\in{\mathbb R}.
\end{equation}
An analysis of this equation yields\footnote{The hyperbolic
cosecant distribution belongs to the family of so-called
generalized hyperbolic secant distributions, and has a
relatively rich history in mathematical statistics originating
with R. Fisher; see \cite{LD_1993}.} a solution
 $h(\xi) = 3\xi \text{csch}(\pi\xi),\xi\in{\mathbb R}$; see \cite[p.\ 146]{orum}.  This majorizing
 kernel determines an
ergodic Markov process $W_{s|0}=\xi, W_{s|1},\,W_{s|2},\,\dots$
along a path $s\in\partial\mathbb{T}$ with transition 
probabilities $H(\eta|\xi)d\eta$. This Markov process is
time-reversible with respect to the unique invariant distribution
$\gamma(d\xi) = (1+\xi^2)h^2(\xi)d\xi, \xi\in{\mathbb R}$.  

The cascade associated with the KPP equation is $\{\lambda(X_v)^{-1}T_v\}_{v\in\mathbb{T}}$ where $\{X_v=W_v\}$ and $\lambda(\xi)=1+\xi^2$. This is a DSY cascade of type ($\mathcal{M}$) with transitional distribution $p(\xi,\eta)d\eta=H(\eta|\xi)d\eta$
along each path. The Markov process is time reversible with respect to the probability measure 
\[\gamma(d\xi)=\frac{5\pi}{9}(1+\xi^2)h(\xi)^2d\xi.\]
In this case, $p(\eta,d\xi)\ll \gamma(d\xi)\ll m(d\xi)$ for all $\eta\in\mathbb{R}$, where $m$ is the Lebesgue measure. Because $0<h(\xi)<2$ for all $\xi$, we have
\begin{eqnarray*}\int_{0}^{\infty }{\int_{0}^{\infty }{p(\xi,\eta)p(\eta,\xi)d\eta d\xi}}&=&\int_{0}^{\infty }{\int_{0}^{\infty }{\frac{{h(\xi-\eta)^2}}{(1+{{\xi}^{2}})(1+{{\eta}^{2}})}d\eta d\xi}}\\
&<&\int_{0}^{\infty }{\int_{0}^{\infty }{\frac{4}{(1+{{\xi}^{2}})(1+{{\eta}^{2}})}d\eta d\xi}}\\
&=&{{\left( \int_{0}^{\infty }{\frac{2}{1+{{\xi}^{2}}}d\xi} \right)}^{2}}<\infty .
\end{eqnarray*}
By \autoref{kppnonexp}, the cascade is non-explosive for every initial state $X_\theta=\xi\in\mathbb{R}$.

\end{example}

\section{Closing Remarks}\label{conclusion}
The non-explosion 
criteria provided by the main theorem apply
to natural stochastic problems arising in the analysis of a class of important
nonlinear PDEs.   The models
  may also be viewed in the context  as generalization of a branching model arising
  in computer science, statistical physics,  and cellular biology.

 To dispense with the time-reversibility condition obviously requires
  a completely different approach than that involving self-adjoint operators
  on $L^2$.  The authors
  introduce a probabilistic
   ``cutset method'' in \cite{DTTW_22} to obtain further sufficient conditions
   for non-explosion
  in the absence of the time-reversibility assumption.  In addition, 
  criteria for explosion which are applicable to the 
  Navier-Stokes equations and more purely probability models are 
  also developed in \cite{DTTW_22}.
   An analytic proof by PDEs has also been obtained
in \cite{DTT_22} for the Bessel cascade example.

\section*{Acknowledgment}
The authors would like to thank the referees for their careful reading, comments, and helpful suggestions that led to an improvement of the exposition.
\appendix

\section{Appendix: DSY Cascades Arising from Evolutionary PDEs}\label{PDE-casc}

Our analysis of the
 non-explosion/explosion problem for DSY cascades is primarily motivated by its connection with semilinear evolutionary PDEs with quadratic nonlinearity. If the nonlinear term is a simple product not involving derivatives, e.g.\ in the case of the Fisher-KPP equation, the relation between the existence and uniqueness and the
 branching diffusions is well-established since the early work of 
 Ikeda, Nagasawa, and Watanabe \cites{ikeda1968branching1, ikeda1968branching2, ikeda1969branching3}, 
and that of It\^o and McKean \cite[pp.\ 206-211]{ito2012diffusion}.  Briefly speaking, this involves a probabilistic
representation of a solution $u(t,x)$ to a scalar evolutionary PDE
whose linear term is the infinitesimal generator $A$ of a diffusion and whose
 nonlinear term is of the form $\sum_jp_ju^j-u$, where $\sum_jp_j=1$,
$p_j\ge 0$. The case 
$A=\Delta$ (the infinitesimal generator of Brownian
motion) and $p_2=1$ corresponds to the classical 
Fisher-KPP  equation. 
This type of branching process in physical space may be used to establish both global-in-time existence 
as well as finite-time blowup results for solutions to the aforementioned equations under suitable conditions \cites{Fujita, Lopez, Nagasawa}. 

However, 
vectorial evolutionary PDEs involving 
{\it derivatives} in the nonlinear term, 
such as the Navier-Stokes equations, are outside of the scope of that theory.
The presence of derivatives in the nonlinear term naturally leads to the introduction of
 Fourier scales in the underlying stochastic cascade. More specifically, the waiting times are dependent on Fourier wave-vectors. This is an important feature 
 distinguishing classical Yule cascades from the DSY cascades considered in this paper. Thus, in order to identify a stochastic structure 
 intrinsic to the Navier-Stokes equations, 
 it is natural to consider the equations in the Fourier space \cite{YLJ_AS_1997}. In this setting, derivatives become Fourier multipliers, and one obtains a
 mild-type formulation of the equation equivalent to an averaging of the underlying stochastic cascade structure. 
It is noteworthy that 
 the stochastic cascade representation of solutions in the Fourier space  provides a unified framework that applies to the
DSY cascades of {\em general}  semilinear evolutionary PDEs including the  Fisher-KPP equation (\autoref{KPP_ex}).

A common feature of most evolutionary equations that generate a DSY cascade is that they define a dissipative dynamical system which, when formulated in the Fourier space, has a linear term that determines the intensities of the exponential waiting times
between branchings,  and a quadratic nonlinear term that 
yields a random binary tree.
  In the examples in \autoref{examples}, the equations can be written in the Fourier space as
\begin{equation}
\label{generic}
\hat{u}(\xi,t) = e^{-\lambda(\xi)t} \hat{u}_0(\xi) + \int_0^t e^{-\lambda(\xi)s} \rho(\xi) \int_{\mathbb{R}^d} B_{\xi}(\hat{u}(\eta,t-s), \hat{u}(\xi-\eta,t-s))\, d \eta ds
\end{equation}
where $\lambda$, $\rho$ are radially symmetric positive functions, and $B_{\xi}(\cdot,\cdot)$ is a bilinear map. The functions $\lambda$, $\rho$, $B_\xi$ are determined by the specific PDE under consideration.  
For example, in the case of the incompressible Navier-Stokes equations in $\mathbb{R}^d$ \cite{YLJ_AS_1997, RD_NM_ET_EW_2015}:
\[
\lambda(\xi)=\nu|\xi|^2,\quad \rho(\xi)=|\xi|,\quad \mbox{and}\quad
B_{\xi}(\hat{u}(\eta,t-s), \hat{u}(\xi-\eta,t-s))=\hat{u}(\eta,t-s)\odot_{\xi}\hat{u}(\xi-\eta,t-s),
\]
with
\[
v\odot_{\xi} w=-i (v\cdot e_{\xi})\, \mathrm{Pr}_{{\xi^{\perp}}}w,\]
where $e_\xi={\xi}/{|\xi|}$, and $\mathrm{Pr}_{{\xi^{\perp}}}w$ is the orthogonal projection of $w$ on the plane orthogonal to $\xi$. The presence of projections is due to the Leray projection of the nonlinear term in Fourier space.

A key step in the probabilistic reformulation of \eqref{generic} is to find a function 
$h(\xi)$ such that 
\begin{equation}\label{functionH}
H(\eta|\xi) = \frac{\rho(\xi)}{\lambda(\xi)}\frac{h(\eta) h(\xi-\eta)}{h(\xi)}
\end{equation}
is a probability density function on $\mathbb{R}^d$.  Once $h$ is identified, we introduce a new unknown $\chi(\xi,t) = \hat{u}(\xi,t)/h(\xi)$, which satisfies the normalized equation
\begin{equation}
\label{genericfourier}
{\chi}(\xi,t) = e^{-\lambda(\xi)t} \chi_0(\xi) + \int_0^t e^{-\lambda(\xi)s} \lambda(\xi) \int_{\mathbb{R}^d} B_{\xi}(\chi(\eta,t-s), \chi(\xi-\eta,t-s)) H(\eta|\xi)\; d\eta ds.
\end{equation}

The solution $\chi(\xi,t)$ to \eqref{genericfourier} can be expressed as the expected value
of  ``{\em solution}"  
stochastic process $\bX(\xi,t)$ satisfying (in distribution):
\begin{equation}\label{gen-recursion}
\bX(\xi ,t)=\left\{ \begin{array}{*{35}{r}}
   {{\chi }_{0}}(\xi ) & \text{if} & {{T}_{\theta }}/\lambda(\xi)\ge t  \\
   B_{\xi}\left({\bX^{(1)}}({{W}_{1}},t-{{T}_{\theta }}),{\bX^{(2)}}({{W}_{2}},t-{{T}_{\theta }})\right) & \text{if} & {{T}_{\theta }}/\lambda(\xi)< t  \\
\end{array} \right.
\end{equation}
where $T_\theta\sim\text{Exp}(1)$, $W_1\sim H(\cdot|\xi)$, $W_2=\xi-W_1$, and $\bX^{(1)}$ and $\bX^{(2)}$ are independent copies of $\bX$. Here, the symbol $\sim$ is used to convey the distribution.

The recursion \eqref{gen-recursion} leads to a family of random wave vectors $\{W_v\}_{v\in\mathbb{T}}$ satisfying $W_\theta=\xi$, $W_{v*1}+W_{v*2} = W_v$ for all $v\in\mathbb{T}$ and, conditionally given $W_v$, $W_{v*1}$ and $W_{v*2}$ are each distributed as $H(\cdot|W_v)$. For $X_v=W_v$, one gets a DSY cascade $\{\lambda(X_v)^{-1} T_v\}_{v\in\mathbb{T}}$ according to \autoref{DSYMcascade}. In most cases, the waiting times between 
branchings only depends on 
the magnitudes of the  random wave vectors which, in turn,  
have a well-behaved branching Markov structure. For the incompressible Navier-Stokes equations in $\mathbb{R}^3$, the choice of  $X_v=|W_v|$  turns out to be more efficient than the choice of $X_v=W_v$ \cites{YLJ_AS_1997, RD_NM_ET_EW_2015}. 

In the case $t<\zeta$ (where $\zeta=\zeta(\xi)$ is the explosion time, see \autoref{expl_time}), sample realizations of
$\bX(\xi,t)$ are uniquely defined 
by \eqref{gen-recursion}  as an iterated composition of 
$B(\cdot,\cdot)$ with the initial data  evaluated at the leaves along the corresponding DSY cascade  (see \autoref{NSE-casc}). 
In the case $t\ge\zeta$, there may be multiple solutions of \eqref{gen-recursion}, including the minimal solution process defined by setting $\bX(\xi,t)=0$ in the event $[t\ge\zeta]$ (see \cite{alphariccati,DTT_22}).
In particular, when $\zeta=\infty$, i.e.\ the case of nonexplosion, the solution process $\bX(\xi,t)$ is uniquely defined for all $t\ge 0$.
\begin{center}
\begin{figure}
  \centerline{\includegraphics[scale=1.1]{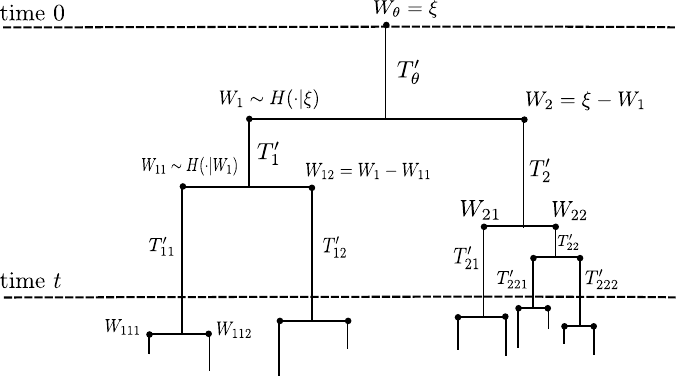}}
\caption{A realization of a non-exploding DSY for \eqref{genericfourier} where $T'_v=T_v/\lambda(W_v)$. In case of NSE, $B_{\xi}(v,w)=v\odot_{\xi}w$ and the solution process becomes:}
\begin{center}$\bX(\xi,t)= (\chi_0(W_{11})\odot_{_{W_{1}}}\chi_0(W_{12}))\odot_\xi [\chi_0(W_{21})\odot_{_{W_{2}}}(\chi_0(W_{221})\odot_{_{W_{22}}}  \chi_0(W_{222}))]$.
\end{center}
\label{NSE-casc}
\end{figure}
\end{center}

Thus, the stochastic explosion or non-explosion of the associated DSY cascades has interesting implications for the existence and uniqueness of global-in-time solutions of these equations \cite{RD_NM_ET_EW_2015, alphariccati, DTT_22}. For example, in the case of the $\alpha$-Riccati equation and the Montgomery-Smith equation \cite{montgomery2001finite}, the explosion of the underlying DSY cascades is used to show non-uniqueness of the initial value problems \cites{alphariccati, DTT_22}. This method also applies for the explosive DSY cascades associated with the generalized KPP equations in physical space in \cite[pp.\ 206-211]{ito2012diffusion}. Regarding the global-in time existence of solutions, the associated DSY cascades provide a pathway to establish global solutions for small initial data (in appropriate settings), consistent with the results obtainable by analytical techniques in the literature \cites{YLJ_AS_1997, lemarie2002, DTT_22, lemarie2016}. However, it is worth emphasizing that in the case of $\alpha$-Riccati equations, $0\le\alpha<1$,  and complex Burgers equation, the global existence of the solutions with arbitrarily large initial data can be proved directly from the corresponding DSY cascades, while in the case of $\alpha$-Riccati equations with $\alpha\ge1$ and the Montgomery-Smith equation, the finite-time blowup and uniqueness/non-uniqueness of the solutions can be established for solutions built on both explosive and non-explosive DSY cascades \cites{alphariccati, DTT_22, RD_NM_ET_EW2019}.

The most natural function space corresponding to cascade 
solutions is the Besov-type space determined by the scaling function $h$:
\[
\mathcal{F}_h=\{u:\ \|u\|_h=\mathrm{esssup}|\hat{u}/h|<\infty\}.
\]
However, other adapted spaces including weighted $L^p$ spaces may
also be considered (see  \cites{RBetal_2003, DTT_22}).
For the NSE in $\mathbb{R}^3$, there are 
two functions $h$ that make $H$ in \eqref{functionH} a probability kernel, 
both first obtained in \cite{YLJ_AS_1997}. 

One is $h_d(\xi)=c/|\xi|^2$, which yields a scale-invariant  (with respect to the natural scaling) probability kernel $H_d(\eta|\xi)=\frac{c|\xi|}{|\eta|^2|\xi-\eta|^2}$. Interested reader can refer to \cite{RD_NM_ET_EW_2015} for more detailed discussion of the connection between this kernel and self-similar solutions to NSE, and \cite{DTTW_22} for the explosion character of associated cascade. The function space 
$\mathcal{F}_{h_d}$ associated with this kernel is a scale-critical space. For the existence and uniqueness results of the 
cascade solutions $\hat{u}(\xi,t)=h(\xi) \mathbb{E}\mathbf{X}(\xi,t)$ in this space, see 
\cites{DTT_22, RBetal_2003, cannone2000, YLJ_AS_1997}, \cite[Sec.\ 8.7]{lemarie2016}.
 
The second scaling function is $h_b(\xi)=c {e^{-|\xi|}}/{|\xi|}$, which was found in \cite{YLJ_AS_1997} and generalized in \cite{RBetal_2003}. It is of the same type as the Bessel kernels  introduced in \cite{aronszajn1961}. In this spirit, we 
refer to the corresponding DSY cascade as the {\em Bessel cascade}. In contrast to the scale-invariant function $h_d$ mentioned earlier, the scaling function $h_b$ defines a smooth function space $\mathcal{F}_{h_b}$. Given the uniqueness of smooth solutions, we can expect that the underlying stochastic cascade should be non-explosive, which is shown in \autoref{NSE_ex}. 

While the well-posedness results for PDEs obtained using DSY cascades are consistent with the results obtained by traditional analytic approaches, improved understanding of these stochastic structures can provide a new mathematical framework
for  the existing theory and open up new avenues to study open questions in the qualitative theory of these PDEs.  In particular, solutions to NSE constructed from the non-explosive Bessel cascade belong to the Leray-Hopf class of weak solutions \cite{YLJ_AS_1997}, providing an additional method
 to view  the regularity problem of NSE.   On the other hand,
 since
   the self-similar cascade is explosive \cite{DTTW_22},  an entirely new framework to explore non-uniqueness (and possibly blow-up) of the mild-type solutions is made
   available, thus potentially
  complementing existing non-uniqueness and blow-up theory of weak solutions \cites{buckmaster2019, bourgain2008, albritton2021non}.

\bibliographystyle{unsrt}
\bibliography{DY_bib}

 \end{document}